\documentclass[12pt, twoside]{article}

\usepackage{amsmath,amsthm,mathtools,mathrsfs}
\usepackage{amssymb}

\usepackage{enumitem,empheq}

\usepackage{graphicx}

\usepackage[T1]{fontenc}

\newtheorem{thm}{Theorem}[section]
\newtheorem{cor}[thm]{Corollary}
\newtheorem{lem}[thm]{Lemma}

\theoremstyle{definition}

\newtheorem{rem}[thm]{Remark}

\numberwithin{equation}{section}

\newcommand{\Rd}{\mathbb{R}^d}

\newcommand{\dhh}{\partial_h}
\newcommand{\dt}{\partial_t}

\allowdisplaybreaks
\begin{document}

\baselineskip=17pt


\title{The evolution variational inequality\\ for weighted Wasserstein metrics\\ in non-convex bounded domains}

\author{Kyogo Murai}

\date{2026}

\maketitle

\renewcommand{\thefootnote}{}

\footnote{2020 \emph{Mathematics Subject Classification}: 49J40, 35K40, 35K25}


\renewcommand{\thefootnote}{\arabic{footnote}}
\setcounter{footnote}{0}


\begin{abstract}
   In this paper, we establish the evolution variational inequality for the weighted Wasserstein distance,
   without assuming convexity of domains.
   Thanks to this evolution variational inequality,
   we can carry out some arguments with weighted Wasserstein metrics in not only convex but also non-convex domains.
   Therefore finally, we apply the evolution variational inequality to the minimizing movement in weighted Wasserstein metrics
   to obtain weak solutions of Keller--Segel systems and Cahn--Hilliard type equations
   in non-convex domains.
   The key point to remove the convexity assumption is a control of the boundary integral.
   To deal with the boundary integral, we use estimates for functions on the boundary, 
   the Sobolev trace embedding and the variant of Kato's inequality.
   Then, the boundary integral can be absorbed by good known terms.
\end{abstract}


\section{Introduction}

\quad The minimizing movement (or JKO scheme) is a method to prove existence of solutions to equations
which have gradient flow structures (energy structures) in suitable metric spaces.
The first work to consider this method in the Wasserstein space,
which is the complete metric space of probability measures,
was carried out by Jordan, Kinderlehrer and Otto in \cite{JKO}.
Here, the Wasserstein distance is defined by 
\begin{align}\label{eq1.1}
   \mathcal{W}_2(\mu_0,\mu_1)^2 
   \coloneq \inf_{\gamma\in \Gamma(\mu_0,\mu_1)}\int_{\mathbb{R}^d\times\mathbb{R}^d}|x-y|^2\, d\gamma(x,y)
   \quad \mathrm{for}\ \mu_0,\mu_1 \in \mathcal{P}_2(\mathbb{R}^d),
\end{align}
where $\mathcal{P}_2(\mathbb{R}^d)$ is the set of probability measures with the finite second moment
and $\Gamma(\mu_0,\mu_1)$ is the set of $\gamma \in \mathcal{P}(\mathbb{R}^d\times\mathbb{R}^d)$ satisfying
\begin{align*}
   \gamma(A\times\mathbb{R}^d) = \mu_0(A),\ \gamma(\mathbb{R}^d\times A) = \mu_1(A)\quad \mathrm{for\ all\ Borel\ set}\ A \subset \mathbb{R}^d.
\end{align*}
Thanks to this definition (the moment formulation),
we can use the perturbation $\mu_a$ of a measure $\mu$, 
which is needed when we consider the Euler--Lagrange equation in the minimizing movement.
Here $\mu_a$ is the push-forward measure of $\mu$ with
a map $T_a : \mathbb{R}^d \ni x \mapsto x +a\xi \in \mathbb{R}^d$ for $a>0$ and $\xi\in C_c^\infty(\mathbb{R}^d;\mathbb{R}^d)$,
that is, $\mu_a = T_a{\#}\mu$, 
where $T_a{\#}\mu$ is defined by $T_a{\#}\mu(A) = \mu(T_a^{-1}(A))$ for all Borel set $A\subset \mathbb{R}^d$.
Then this method can be also adapted for bounded domains
without assuming convexity by the zero extension of a measure $\mu$.
Hence the existence of solutions to equations which have the gradient flow structures in the Wasserstein space
are proved in not only whole space $\mathbb{R}^d$ but also bounded domains (we do not require convexity of domains, see \cite{Mi1,Mi2,KM}).

\quad On the other hand, in \cite{DNS}, a new metric space of measures is introduced, 
which is called the weighted Wasserstein space.
Here the weighted Wasserstein distance is formally defined by 
\begin{align*}
   W_m(\mu_0,\mu_1)^2 
   = \inf\left[\int_0^1\int_{\mathbb{R}^d} \frac{|\boldsymbol{\nu}(x,t)|^2}{m(\mu(x,t))}\, dx\, dt : (\mu,\boldsymbol{\nu}) \in CE(0,1;\mu_0\to\mu_1)\right],
\end{align*}
where $m : [0,M] \to [0,\infty)$ is a concave function ($M\in(0,\infty]$) and
$CE(0,1;\mu_0\to\mu_1)$ is the set of pairs of nonnegative Radon measures satisfying
the continuity equation in the weak sense, $\mu(\cdot,0) = \mu_0$ and $\mu(\cdot,1) = \mu_1$
(see \cite{DNS} for the precise definition of the weighted Wasserstein distance).
The weighted Wasserstein distance is the extension of the Wasserstein distance,
that is, if the function $m$ called mobility is linear ($m(r) = r$)
then the weighted Wasserstein distance coincides with the Wasserstein distance by the Benamou--Brenier theorem (\cite{BB}).
However, in contrast to the Wasserstein distance, 
the weighted Wasserstein distance does not have a moment representation such as \eqref{eq1.1},
so we cannot use the same way to consider the perturbation in the weighted Wasserstein space.

\quad For this reason, when we consider the Euler--Lagrange equation in the minimizing movement for the weighted Wasserstein space,
we employ another way called the flow interchange lemma introduced in \cite{MMS}.
According to the previous papers (\cite{LMS, M, Z, Zi}), convexity of domains is required to establish
the evolution variational inequality in the flow interchange lemma for the weighted Wasserstein space.
Now let us explain the flow interchange lemma.
First, we consider the minimizing movement as follows:
let $\Omega \subset \mathbb{R}^d$ be a bounded convex domain with the smooth boundary,
$X(\Omega)$ be a suitable function space,
$W_m$ be the weighted Wasserstein distance 
and $E : X(\Omega) \to (-\infty,\infty]$ be a proper lower semi-continuous functional.
Then for the initial data $u_0 \in X(\Omega)$ with $E(u_0) < \infty$ and $\tau > 0$,
define $\{u_\tau^k\}_{k} \subset X(\Omega)$ inductively by
\begin{align}\label{eq1.2}
   u_\tau^0 = u_0,\ u_\tau^{k+1} \in \mbox{argmin}\Psi_\tau^k\quad
   \mathrm{where}\ \Psi_\tau^k(v) = \frac{1}{2\tau}W_m(v,u_\tau^k)^2 + E(v)\quad \mathrm{for}\ v \in X(\Omega),
\end{align}
where $\mbox{argmin}\Phi$ denotes the set of minimizers for $\Phi$.
Next, let $F : X(\Omega) \to (-\infty,\infty]$ be a proper lower semi-continuous functional
and $S_t : \mbox{Dom}(F) \to \mbox{Dom}(F), t\geq0$ be a continuous semi-group, where $\mbox{Dom}(F) \coloneq \{v \in X(\Omega);F(v) < \infty\}$.
Assume that the Evolution Variational Inequality (EVI) for the weighted Wasserstein distance holds:
there exists $\lambda \in \mathbb{R}$ such that
\begin{align}\label{eq1.3}
   \frac{1}{2}\limsup_{h\downarrow0}\left[\frac{W_m(S_h(u),v)^2 - W_m(u,v)^2}{h}\right]
   + \frac{\lambda}{2}W_m(u,v)^2 \leq F(v) - F(u),
\end{align}
for all $u,v \in \mbox{Dom}(F)$ with $W_m(u,v) < \infty$.
Then since $u_\tau^k$ is the minimizer of $\Psi_\tau^{k-1}$ in $X(\Omega)$
and $S_h(u_\tau^k) \in X(\Omega)$, we have
\begin{align}\label{eq1.4}
   \frac{1}{2\tau}W_m(u_\tau^k,u_\tau^{k-1})^2 + E(u_\tau^k) 
   \leq \frac{1}{2\tau}W_m(S_h(u_\tau^k),u_\tau^{k-1})^2 + E(S_h(u_\tau^k))\quad \mathrm{for}\ h>0.
\end{align}
Combining this with the EVI \eqref{eq1.3}, we obtain
\begin{align}\label{eq1.5}
   -\limsup_{h\downarrow0}\frac{E(S_h(u_\tau^k)) - E(u_\tau^k)}{h}
   \leq -\frac{\lambda}{2}W_m(u_\tau^k,u_\tau^{k-1})^2 + F(u_\tau^{k-1}) - F(u_\tau^k).
\end{align}
This inequality formally implies that the derivative of $E$ at the minimizer $u_\tau^k$
can be controlled by the another functional $F$.
Thus, choosing $F$ suitably,
we can use the inequality \eqref{eq1.5} to obtain the Euler--Lagrange equation and to improve the regularity of the minimizer $u_\tau^k$.

\quad The key point of the flow interchange lemma is the EVI \eqref{eq1.3}
because the inequality \eqref{eq1.4} is automatically obtained by the minimizing scheme \eqref{eq1.2} if the semi-group $S_t$ exists.
Hence in \cite{LMS} (see also \cite[Theorem 2.2]{DS}), it is shown that the EVI for the weighted Wasserstein distance holds 
by proving the following differential inequality:
\begin{align}\label{eq1.6}
   \frac{1}{2}\partial_h\boldsymbol{A}^h(z) + \partial_zF(\rho^h(z)) 
   \leq - \lambda z \boldsymbol{A}^h(z)\quad \mathrm{for}\ z \in [0,1],\ h>0,
\end{align}
where $\rho^h(z) = S_{hz}(\rho(\cdot,z))$ for $\rho \in C^\infty(\overline{\Omega}\times[0,1])$,
\begin{align*}
   \boldsymbol{A}^h(z) = \int_{\Omega} m(\rho^h(z))|\nabla \phi^h(z)|^2\, dx,
\end{align*}
and $\phi^h(z)$ satisfies $\partial_z \rho^h(z) + \nabla\cdot(m(\rho^h(z))\nabla\phi^h(z)) = 0$ in $\Omega$
with a suitable boundary condition.
Note that the integral of $\boldsymbol{A}^h$ with respect to $z\in[0,1]$
can approximate the weighted Wasserstein distance (see \cite[Proposition 2.2]{LMS} and \cite[Lemma 2.8]{M}).
Then, in the proof of the inequality \eqref{eq1.6} in \cite{LMS},
the convexity of the domain $\Omega$ is required 
to control a boundary integral.
Hence up to now, the minimizing movement in the weighted Wasserstein space can be 
applied only in bounded convex domains (see also \cite{Z,Zi,M}).

\quad The purpose of this paper is to prove the differential inequality \eqref{eq1.6}
without assuming the convexity of the domain $\Omega$.
The obstacle is the boundary integral
\begin{align*}
   \int_{\partial\Omega} m(\rho^h(z))\nabla|\nabla\phi^h(z)|^2\cdot\boldsymbol{n}\, dS,
\end{align*}
which appears in the proof of \eqref{eq1.6},
where $\boldsymbol{n}$ is the outer unit normal vector to $\partial\Omega$.
If the domain is convex, then the above boundary integral is nonpositive,
that is, it can be estimated by zero from above.
The basic idea to control the boundary integral is inspired by \cite{ISY},
so we use the estimate of the term $\nabla|\nabla\phi^h(z)|^2\cdot\boldsymbol{n}$ on the boundary (see Lemma \ref{lem2.1}),
the Sobolev trace embedding (Lemma 2.2) and the fractional Gagliardo--Nirenberg inequality (Lemma \ref{lem2.4}),
namely we have
\begin{align*}
   \int_{\partial\Omega} m(\rho^h(z))\nabla|\nabla\phi^h(z)|^2\cdot\boldsymbol{n}\, dS
   \leq C\left(\left\|\nabla\left[(m(\rho^h(z)))^{\frac{1}{2}}|\nabla\phi^h(z)|\right]\right\|_{L^2(\Omega)}^2 + \boldsymbol{A}^h(z)\right).
\end{align*}
Of cource, the considered functions (the inequality) are different from the ones in \cite{ISY},
then, we need to control the additional term
\begin{align}\label{eq1.7}
   \int_{\Omega} m(\rho^h(z))|\nabla|\nabla\phi^h(z)||^2\, dx,
\end{align}
which comes from the first term on the right-hand side.
On the other hand, in the proof of \cite{LMS}, the term 
\begin{align}\label{eq1.8}
   -\int_\Omega m(\rho^h(z))|\nabla^2\phi^h(z)|^2\, dx
\end{align}
appears and is estimated by zero from above (this negative term did not used effectively for the estimate),
where $\nabla^2f$ denotes the Hessian matrix of $f$ and 
\begin{align*}
   |\nabla^2 f| = \left(\sum_{i,j=1}^d \left(\partial_{ij}f\right)^2\right)^{\frac{1}{2}}.
\end{align*}
Thus, combining the variant of Kato's inequality (Lemma \ref{lem2.5}) with \eqref{eq1.7},
the additional term \eqref{eq1.7} can be absorbed by the negative term \eqref{eq1.8}.

\quad This paper is organized as follows.
In Section 2, we collect some estimates to control the boundary integral.
In Section 3, at first we introduce some assumptions of the mobility $m$
and some settings. Then we present our main theorem (the differential inequality) and its proof.
Finally, in Section 4, 
we apply our main theorem to the minimizing movement in the weighted Wasserstein space.
In particular, we show existence of weak solutions to 
Keller--Segel systems and Cahn--Hilliard type equations in a bounded non-convex domain
by the minimizing movement.


\section{Preliminaries}

\quad Let $d \in \mathbb{N}$ and 
$\Omega$ be a bounded domain in $\mathbb{R}^d$ with the smooth boundary.
The following lemmas have an imprtant role to remove the convexity assumption of domains.

\begin{lem}[{\cite[Lemma 4.2]{MS}}]\label{lem2.1}
   There exists a constant $C_\Omega>0$ depending on $\Omega$ such that
   \begin{align*}
      \nabla (|\nabla w|^2)\cdot\boldsymbol{n} \leq C_\Omega|\nabla w|^2\quad \mathrm{on}\ \partial\Omega,
   \end{align*}
   for all $w \in C^2(\bar{\Omega})$ satisfying $\nabla w\cdot\boldsymbol{n} = 0$ on $\partial\Omega$. 
\end{lem}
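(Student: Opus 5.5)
The idea is to express $\nabla(|\nabla w|^2)\cdot\boldsymbol{n}$ on $\partial\Omega$ through the second fundamental form of the boundary, which is smooth and hence bounded. We work locally near a boundary point. Extend $\boldsymbol{n}$ to a smooth vector field $\boldsymbol{N}$ on a neighbourhood of $\partial\Omega$, for instance $\boldsymbol{N}=\nabla \delta$ with $\delta$ the signed distance function, which is smooth in a tubular neighbourhood because $\partial\Omega$ is smooth. Then
\begin{align*}
   \nabla(|\nabla w|^2)\cdot\boldsymbol{n} = 2\,(\nabla^2 w\,\nabla w)\cdot\boldsymbol{n} = 2\sum_{i,j}\partial_{ij}w\,\partial_i w\, n_j \quad \text{on } \partial\Omega .
\end{align*}

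The key step uses the Neumann condition. The function $g:=\nabla w\cdot\boldsymbol{N}$ is $C^1$ near $\partial\Omega$ and vanishes on $\partial\Omega$, so its tangential derivatives vanish there. On $\partial\Omega$ the vector $\nabla w$ is tangential, since $\nabla w\cdot\boldsymbol{n}=0$. Differentiating $g$ in the tangential direction $\nabla w$ therefore gives
\begin{align*}
   0=\nabla w\cdot\nabla g = \sum_{i,j}\partial_i w\,\partial_{ij}w\,N_j + \sum_{i,j}\partial_i w\,\partial_j w\,\partial_i N_j \quad\text{on }\partial\Omega.
\end{align*}
The first sum is exactly $\tfrac12\nabla(|\nabla w|^2)\cdot\boldsymbol{n}$, because $\nabla^2 w$ is symmetric. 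Hence
\begin{align*}
   \nabla(|\nabla w|^2)\cdot\boldsymbol{n} = -2\,(\nabla w)^{T}(D\boldsymbol{N})\,\nabla w \quad\text{on }\partial\Omega.
\end{align*}
Restricted to tangent vectors, $D\boldsymbol{N}$ is the second fundamental form (shape operator) of $\partial\Omega$.

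It then remains to set $C_\Omega:=2\sup_{\partial\Omega}\|D\boldsymbol{N}\|$. This is finite because $\partial\Omega$ is smooth and compact, and it depends only on $\Omega$. This yields $\nabla(|\nabla w|^2)\cdot\boldsymbol{n}\le C_\Omega|\nabla w|^2$. The same identity shows why the sign is automatically nonpositive when $\Omega$ is convex: there $D\boldsymbol{N}$ is positive semidefinite on tangent vectors.

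The main point needing care is the justification of the tangential differentiation. We need $g\in C^1$, which requires $w\in C^2(\bar\Omega)$ and a smooth extension $\boldsymbol{N}$. We also need the fact that a $C^1$ function vanishing on a $C^1$ hypersurface has zero derivative along tangent directions. Both follow from the hypotheses, since $w\in C^2(\bar\Omega)$ and $\partial\Omega$ is smooth, so I expect no genuine obstacle beyond this bookkeeping.
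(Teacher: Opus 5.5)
Your proof is correct and is essentially the standard argument behind the cited \cite[Lemma 4.2]{MS}; the paper gives no proof of its own. That argument differentiates the Neumann condition $\nabla w\cdot\boldsymbol{n}=0$ along the tangential direction $\nabla w$, which yields the identity $\nabla(|\nabla w|^2)\cdot\boldsymbol{n}=-2(\nabla w)^{T}(D\boldsymbol{N})\nabla w$, and this is bounded by the curvature of $\partial\Omega$. Your signed-distance extension of $\boldsymbol{n}$ is just a coordinate-free version of the usual local-graph computation, and the only delicate point, that tangential derivatives vanish for a $C^1(\bar\Omega)$ function that is zero on $\partial\Omega$, is standard and you flag it correctly.
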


\begin{lem}[{\cite[Proposition 4.22(ii) and Proposition 4.24(i)]{HT}}]\label{lem2.2} 
   Let $s>\frac{1}{2}$. 
   Then there exists a constant $C_0>0$ such that 
   \begin{align*}
      \|f\|_{L^2(\partial\Omega)}
      \leq C_0\|f\|_{W^{s,2}(\Omega)}\quad \mathrm{for}\ f \in W^{s,2}(\Omega).
   \end{align*}
\end{lem}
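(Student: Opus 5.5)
The statement to prove is Lemma \ref{lem2.2}: the trace estimate $\|f\|_{L^2(\partial\Omega)}\le C_0\|f\|_{W^{s,2}(\Omega)}$ for $s>\frac12$ on a smooth bounded domain. The plan is to reduce to a half-space by localization and flattening, prove the estimate there by Fourier analysis in the tangential variables, and pass to general $f$ by density.

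\emph{Reduction to the half-space.} Since $\partial\Omega$ is smooth and compact, we cover it by finitely many open sets $U_1,\dots,U_N$. On each $U_j$ there is a smooth diffeomorphism $\Phi_j$ that maps $U_j\cap\Omega$ onto a piece of the half-space $\mathbb{R}^d_+=\{x_d>0\}$ and maps $U_j\cap\partial\Omega$ onto a piece of $\{x_d=0\}$. We take a partition of unity $\{\chi_j\}$ subordinate to this cover, plus one interior cutoff $\chi_0$ whose support stays away from $\partial\Omega$, so that $\chi_0$ makes no contribution to the trace. Two standard facts are needed for $0<s\le 2$, and hence for every $s$ by iterating: multiplication by a smooth cutoff is bounded on $W^{s,2}$, and composition with a smooth diffeomorphism is bounded on $W^{s,2}$. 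The diffeomorphism estimate is the Gagliardo-seminorm change of variables, using that $|\Phi(x)-\Phi(y)|\simeq|x-y|$. Surface measure on $\partial\Omega$ is comparable to Lebesgue measure on $\mathbb{R}^{d-1}$ under each $\Phi_j$. Hence it suffices to prove
\[
\|g(\cdot,0)\|_{L^2(\mathbb{R}^{d-1})}\le C\|g\|_{W^{s,2}(\mathbb{R}^d_+)}
\]
for compactly supported $g$. It is also enough to take $\frac12<s<1$: for larger $s$ we use $\|g\|_{W^{s',2}}\le C\|g\|_{W^{s,2}}$ with $s'\in(\frac12,1)$.

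\emph{Extension and Fourier step.} Next we extend $g$ from $\mathbb{R}^d_+$ to $\mathbb{R}^d$ by even reflection in $x_d$. For $s<1$ the extension is bounded in $W^{s,2}$. To see this, split the Gagliardo double integral into the four regions according to the signs of $x_d$ and $y_d$. In the mixed regions, $|x-y|\ge|x-\tilde y|$, where $\tilde y$ is the reflection of $y$, so those terms are controlled by the half-space seminorm. For Schwartz functions $G$ on $\mathbb{R}^d$, write $\xi=(\xi',\xi_d)$. Fourier inversion in $\xi_d$ followed by Cauchy--Schwarz gives
\[
|\widehat{G(\cdot,0)}(\xi')|^2=\Big|\int\hat G(\xi',\xi_d)\,d\xi_d\Big|^2\le \int(1+|\xi|^2)^{s}|\hat G|^2\,d\xi_d\int(1+|\xi'|^2+\xi_d^2)^{-s}\,d\xi_d .
\]
The last integral equals $c_s(1+|\xi'|^2)^{\frac12-s}$, and it is finite precisely because $s>\frac12$. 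Integrating over $\xi'$ and applying Plancherel gives the bound $\|G(\cdot,0)\|_{L^2}\le C\|G\|_{H^s(\mathbb{R}^d)}$. We then use the equivalence of the Gagliardo norm with the Bessel-potential norm $H^s$ on $\mathbb{R}^d$.

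\emph{Density.} Finally, the estimate proved for smooth $f$ defines the trace operator on all of $W^{s,2}(\Omega)$, because $C^\infty(\bar\Omega)$ is dense in $W^{s,2}(\Omega)$ for smooth bounded $\Omega$. This density follows from the same localization together with mollification after a small inward translation.

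The main technical point is the invariance of $W^{s,2}$ under cutoffs and diffeomorphisms, together with the reflection extension. These are routine but tedious, and they are exactly the content of the results cited from \cite{HT}, so in the paper one may simply invoke them. The genuinely essential step is the one-dimensional integral $\int(1+\xi_d^2)^{-s}\,d\xi_d<\infty$, which is where $s>\frac12$ is used.
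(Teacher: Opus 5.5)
Your argument is correct, but it is not what the paper does. The paper gives no proof of this lemma; it quotes two results from \cite{HT}. As cited, these amount to identifying the intrinsic (Gagliardo--Slobodeckij) space $W^{s,2}(\Omega)$ with the restriction space of $H^s(\mathbb{R}^d)$, together with the trace theorem for $s>\frac12$ on smooth bounded domains.

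Your proposal rebuilds both ingredients by hand:
\begin{itemize}
\item localization, flattening and even reflection play the role of the abstract extension and norm-equivalence result;
\item the Cauchy--Schwarz computation in $\xi_d$ is the trace theorem on $\mathbb{R}^d$.
\end{itemize}
The citation buys brevity and generality: it covers all $s>\frac12$, and the general theory gives the sharp target $H^{s-\frac12}(\partial\Omega)$. Your route is elementary and self-contained. It only needs $\frac12<s<1$, which is exactly the range used in Theorem~\ref{thm3.1} and Lemma~\ref{lem4.7}. It also shows that $s>\frac12$ enters only through $\int_{\mathbb{R}}(1+t^2)^{-s}\,dt<\infty$. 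If you keep the factor $(1+|\xi'|^2)^{\frac12-s}$ instead of bounding it by $1$, the same computation gives the $H^{s-\frac12}(\mathbb{R}^{d-1})$ bound as well.

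One small step should be added. You state the Fourier estimate for Schwartz functions, but the even reflection $Eg$ of a function $g$ smooth up to $\{x_d=0\}$ is only Lipschitz across that hyperplane, so it is not Schwartz. Mollify it: $G_\epsilon=Eg*\eta_\epsilon$ converges to $Eg$ in $H^s(\mathbb{R}^d)$ and uniformly, with supports in a fixed compact set. Hence $G_\epsilon(\cdot,0)\to g(\cdot,0)$ in $L^2(\mathbb{R}^{d-1})$, and the inequality passes to the limit.

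Also, if you reduce to $\frac12<s<1$ on $\Omega$ itself before localizing, the cutoff and diffeomorphism invariance is only needed for $0<s<1$. There it is just the direct Gagliardo-seminorm estimates you indicate.
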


The next lemma is a fractional version of the Gagliardo--Nirenberg inequality
(see, for instance, \cite{BM} for more general cases).

\begin{lem}[fractional Gagliardo--Nirenberg inequality]\label{lem2.4}
   Let $0<s<1$.
   Then there exist constants $C_1, C_2>0$ such that for all $f \in H^{1}(\Omega)$,
   \begin{align*}
      \|f\|_{W^{s,2}(\Omega)} \leq C_1\|\nabla f\|_{L^2(\Omega)}^s\|f\|_{L^2(\Omega)}^{1-s} + C_2\|f\|_{L^2(\Omega)}.
   \end{align*}
\end{lem}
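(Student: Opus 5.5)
We prove Lemma \ref{lem2.4} by reducing it to the corresponding whole-space interpolation inequality. Since $\Omega$ is a bounded domain with smooth boundary, there is a bounded linear extension operator $P$ (Stein, or reflection across $\partial\Omega$ via a partition of unity and local flattening) that maps $L^2(\Omega)\to L^2(\mathbb{R}^d)$ and $H^1(\Omega)\to H^1(\mathbb{R}^d)$ simultaneously. Thus
\begin{align*}
\|Pf\|_{L^2(\mathbb{R}^d)}\le C\|f\|_{L^2(\Omega)},\qquad \|Pf\|_{H^1(\mathbb{R}^d)}\le C\|f\|_{H^1(\Omega)}.
\end{align*}
The restriction inequality $\|f\|_{W^{s,2}(\Omega)}\le \|Pf\|_{W^{s,2}(\mathbb{R}^d)}$ is immediate, because the Gagliardo seminorm over $\Omega\times\Omega$ is bounded by the one over $\mathbb{R}^d\times\mathbb{R}^d$.

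On $\mathbb{R}^d$ we use the Fourier characterization $\|g\|_{W^{s,2}(\mathbb{R}^d)}^2\simeq\int(1+|\xi|^2)^s|\hat g|^2\,d\xi$. Hölder's inequality with exponents $1/s$ and $1/(1-s)$ then gives
\begin{align*}
\int(1+|\xi|^2)^s|\hat g|^2\,d\xi\le\left(\int(1+|\xi|^2)|\hat g|^2\,d\xi\right)^s\left(\int|\hat g|^2\,d\xi\right)^{1-s},
\end{align*}
that is, $\|g\|_{W^{s,2}}\le C\|g\|_{H^1}^s\|g\|_{L^2}^{1-s}$. Applying this with $g=Pf$ and the extension bounds yields
\begin{align*}
\|f\|_{W^{s,2}(\Omega)}\le C\|f\|_{H^1(\Omega)}^s\|f\|_{L^2(\Omega)}^{1-s}.
\end{align*}

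To reach the stated form, we write $\|f\|_{H^1(\Omega)}\le \|\nabla f\|_{L^2(\Omega)}+\|f\|_{L^2(\Omega)}$ and use $(a+b)^s\le a^s+b^s$ for $0<s<1$. This gives
\begin{align*}
\|f\|_{W^{s,2}(\Omega)}\le C\|\nabla f\|_{L^2(\Omega)}^s\|f\|_{L^2(\Omega)}^{1-s}+C\|f\|_{L^2(\Omega)},
\end{align*}
so $C_1=C_2=C$ works.

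The only delicate point is that a single operator $P$ must be bounded on both $L^2$ and $H^1$ with the interpolation taken on $Pf$, not on $f$. Controlling $\|Pf\|_{H^1}^s\|Pf\|_{L^2}^{1-s}$ by the product of the two separate bounds then costs only a constant. The reflection-type extension for smooth bounded domains has exactly this property. Alternatively one can cite the general results of \cite{BM} directly.
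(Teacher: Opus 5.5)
Your argument is correct. The paper does not prove Lemma~\ref{lem2.4} at all; it only refers to Brezis--Mironescu \cite{BM} for the general fractional Gagliardo--Nirenberg theory, so what you give is a self-contained alternative for the Hilbertian case.

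Your chain of steps is sound:
\begin{itemize}
\item You reduce to $\mathbb{R}^d$ with a single extension operator bounded on both $L^2$ and $H^1$. This is the one point that needs care, and you flag it correctly.
\item You pass to the Fourier form of the Gagliardo seminorm.
\item You interpolate by H\"older in frequency to get $\|f\|_{W^{s,2}(\Omega)}\le C\|f\|_{H^1(\Omega)}^s\|f\|_{L^2(\Omega)}^{1-s}$.
\item Subadditivity, $(a+b)^s\le a^s+b^s$, then gives the stated inhomogeneous form.
\end{itemize}
The additive $C_2\|f\|_{L^2(\Omega)}$ term is unavoidable in any case: for $f$ constant, the left side is positive while $\|\nabla f\|_{L^2(\Omega)}=0$.

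The citation gives the paper generality --- $W^{s,p}$ with $p\neq 2$, general interpolation exponents, Lipschitz domains --- but leaves the lemma as a black box. Your route gives a short proof from Plancherel, H\"older and an extension operator only. That suffices here, since the lemma is used only with $p=2$ and $\frac12<s<1$ (in the proofs of Theorem~\ref{thm3.1} and Lemma~\ref{lem4.7}). It also shows that the regularity of $\partial\Omega$ enters only through the extension operator and that no convexity is needed, which matches the purpose of the paper.
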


The following lemma is the variant of Kato's inequality.

\begin{lem}\label{lem2.5}
   Let $w \in C^2(\Omega)$. Then it holds
   \begin{align*}
      |\nabla|\nabla w|| \leq |\nabla^2 w|\quad\mathrm{a.e.\ in}\ \Omega,
   \end{align*}
   where 
   \begin{align*}
      |\nabla^2 w| = \left(\sum_{i,j=1}^d \left(\partial_{ij}w\right)^2\right)^{\frac{1}{2}}.
   \end{align*}
\end{lem}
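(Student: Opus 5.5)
The inequality is local and pointwise, so I will argue separately on the open set where $\nabla w\neq0$ and on its complement $Z=\{x\in\Omega;\ \nabla w(x)=0\}$, and then check that these two pieces cover $\Omega$ up to a null set.

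On the open set $\{\nabla w\neq 0\}$ the function $|\nabla w|=\left(\sum_i(\partial_i w)^2\right)^{1/2}$ is $C^1$, being the composition of the $C^1$ map $\nabla w$ with the norm, which is smooth away from the origin. Differentiating gives
\begin{align*}
   \partial_j|\nabla w| = \frac{\sum_{i=1}^d \partial_i w\,\partial_{ij}w}{|\nabla w|},
\end{align*}
that is, $\nabla|\nabla w| = \nabla^2 w\,\frac{\nabla w}{|\nabla w|}$. Hence $|\nabla|\nabla w||\le \|\nabla^2 w\|_{op}\le|\nabla^2 w|$. Equivalently, for each $j$ one applies Cauchy--Schwarz, $\left|\sum_i \partial_i w\,\partial_{ij}w\right|\le |\nabla w|\left(\sum_i(\partial_{ij}w)^2\right)^{1/2}$, and sums the squares over $j$. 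This establishes the inequality pointwise on $\{\nabla w\neq0\}$.

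On $Z$, the function $|\nabla w|$ is the modulus of a $C^1$ map, so it is locally Lipschitz. By Rademacher's theorem it is differentiable a.e.\ in $\Omega$, and its a.e.\ gradient coincides with its weak gradient. I then use the standard fact that for a Lipschitz (or $W^{1,1}_{loc}$) function $g$, one has $\nabla g=0$ a.e.\ on every level set $\{g=c\}$. Applying this with $g=|\nabla w|$ and $c=0$ shows $\nabla|\nabla w|=0$ a.e.\ on $Z$, so the inequality holds trivially there.

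There is also a more elementary route on $Z$: at any point $x_0\in Z$ where $|\nabla w|$ is differentiable, $x_0$ is a minimum of $|\nabla w|\ge0$, so the derivative vanishes. Either way the two pieces together cover $\Omega$ up to a null set, which gives the claim a.e.

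The only delicate step is justifying the a.e.\ statement on the critical set $Z$, and the elementary argument handles it cleanly. Everything else is a direct computation followed by Cauchy--Schwarz.
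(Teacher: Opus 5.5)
Your proof is correct and has the same structure as the paper's: Rademacher's theorem for the locally Lipschitz function $|\nabla w|$, a split according to whether $\nabla w(x)$ vanishes, and the chain rule plus Cauchy--Schwarz on $\{\nabla w\neq0\}$. The only difference is on the critical set $Z$. There the paper computes the squared difference quotient $\bigl(|\nabla w(x+he_i)|/h\bigr)^2\to\sum_{j}(\partial_{ij}w(x))^2$ and sums over $i$, whereas you note that a differentiability point of $|\nabla w|$ in $Z$ is an interior minimum of a nonnegative function; this is shorter and shows directly that $\nabla|\nabla w|=0$ a.e.\ on $Z$.
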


\begin{proof}
   First, since $w \in C^2(\Omega)$, $|\nabla w|$ is locally Lipschitz continuous in $\Omega$
   and thus differentiable in $\Omega\setminus\mathcal{N}$ for some null set $\mathcal{N}$.
   Fix $x \in \Omega\setminus\mathcal{N}$.
   If $|\nabla w(x)| \ne 0$ then it follows from the chain rule that
   \begin{align*}
      \nabla|\nabla w(x)| = \frac{\nabla^2 w(x)\nabla w(x)}{|\nabla w(x)|},
   \end{align*}
   where $\nabla^2 w(x)$ is the Hessian matrix of $w$ at $x$.
   Thus by the Cauchy--Schwarz inequality, we obtain
   \begin{align*}
      |\nabla|\nabla w(x)||^2 
      &= \frac{1}{|\nabla w(x)|^2}\sum_{j=1}^d\left(\sum_{i=1}^d(\partial_{ij}w(x))(\partial_i w(x))\right)^2\\
      &\leq \frac{1}{|\nabla w(x)|^2}\left(\sum_{j=1}^d\sum_{i=1}^d(\partial_{ij}w(x))^2\right)\left(\sum_{i=1}^d(\partial_i w(x))^2\right)\\
      &= \frac{1}{|\nabla w(x)|^2}|\nabla^2 w(x)|^2|\nabla w(x)|^2
      = |\nabla^2 w(x)|^2.
   \end{align*}
   On the other hand, 
   if $|\nabla w(x)| = 0$ then 
   we have $\partial_i w(x) = 0$ for all $i=1,\cdots,d$.
   Let $e_i = (0,\cdots,0,1,0,\cdots,0)$ be the $i$-th standard basis vector in $\mathbb{R}^d$ 
   and $h \in \mathbb{R}$ be a small real number.
   Then the assumption $|\nabla w(x)| = 0$ and 
   the differentiability of $\partial_j w(x)$ for $j = 1,\cdots, d$ yield
   \begin{align*}
      \left(\frac{|\nabla w(x+he_i)| - |\nabla w(x)|}{h}\right)^2
      &= \left|\frac{\nabla w(x+he_i)}{h}\right|^2\\
      &= \sum_{j=1}^d\left(\frac{\partial_j w(x+he_i)}{h}\right)^2\\
      &\to \sum_{j=1}^d(\partial_{ij}w(x))^2\quad \mathrm{as}\ h\to0.
   \end{align*}
   Therefore we conclude
   \begin{align*}
      |\nabla |\nabla w(x)||^2 \leq \sum_{i=1}^d\sum_{j=1}^d(\partial_{ij}w(x))^2 = |\nabla^2 w(x)|^2.
   \end{align*}
   The proof is completed.
\end{proof}

\section{Main theorem}
\quad In this section, we first introduce some conditions of
the mobility $m$, and 
define the functional $F$ and the semi-group $S_t$
in order to prove the differential inequality \eqref{eq1.6}.

\subsection{Assumptions of the mobility}

\quad Let $M \in (0,\infty]$.
When $M=\infty$, 
let $m : [0,\infty) \to [0,\infty)$
be a smooth concave function satisfying $m(0) = 0$.
For $\varepsilon>0$, define the approximated mobility $m_\varepsilon: [0,\infty) \to [0,\infty)$ by
\begin{align*}
   m_\varepsilon(r) \coloneq m(r+\varepsilon).
\end{align*}
Then assume that the approximated mobility satisfies the following conditions:
\begin{align}\label{M-LSC}
   \begin{cases}
      \displaystyle
      \sup_{r\geq0}|m_\varepsilon^\prime(r)| < \infty,\\
      \displaystyle
      \sup_{r\geq0}|m_\varepsilon^{\prime\prime}(r)m_\varepsilon(r)| < \infty,
   \end{cases}\tag{M-LSC}
\end{align}
($m_\varepsilon$ is Lipschitz and $m_\varepsilon^2$ is semi-convex, see also \cite[Section 1.1]{LMS}),
and additionally
\begin{align}\label{M-A}
   \sup_{r\geq0}\frac{|m_\varepsilon^\prime(r)|^2}{|m_\varepsilon^{\prime\prime}(r)m_\varepsilon(r)|} < \infty.\tag{M-A}
\end{align}
Note that if 
\begin{align*}
   \sup_{r\geq0}\frac{|m_\varepsilon^\prime(r)|^2}{|m_\varepsilon^{\prime\prime}(r)m_\varepsilon(r)|} = 0,
\end{align*}
then $m(r) = 0$ for all $r\geq0$ by $m(0) = 0$.
Examples of mobility satisfying the above conditions are the following:
\begin{align*}
   m(r) = r^\alpha\ (0<\alpha<1),\quad m(r) = \log(1+r),\quad m(r) = r^\alpha(r+c)^\beta\ (0<\alpha+\beta\leq 1, c>0).
\end{align*}
When $M<\infty$, we consider the specific mobility of the form
\begin{align*}
   m(r) = [r(M-r)]^\alpha\ (0<\alpha\leq 1),
\end{align*}
and define the approximated moblity $m_\varepsilon: [0,M] \to [0,\infty)$ by
\begin{align*}
   m_\varepsilon(r) = [(r+\varepsilon)(M-r+\varepsilon)]^\alpha.
\end{align*}
One can confirm that $m_\varepsilon$ satisfies conditions \eqref{M-LSC} and \eqref{M-A}.

\begin{rem}
   The assumptions \eqref{M-LSC} and \eqref{M-A} are required to get the finiteness of coefficient
   $\lambda$ in the EVI \eqref{eq1.3} (or $\lambda_\delta$ in Theorem \ref{thm3.1}).
   If the domain is convex, then it is enough to assume \eqref{M-LSC} (see \cite{LMS}).
   However, if the domain is non-convex, then we need the additional condition \eqref{M-A} 
   in order to control more terms which come from the boundary integral.
   On the other hand, as we see in Section 4, 
   the mobility $m$ (approximated mobility $m_\varepsilon$) in the applications
   satisfies the condition \eqref{M-A}.
\end{rem}

The following lemma shows properties of approximation from above
(see also \cite[Lemma 5.1]{LMS} for approximation from below).

\begin{lem}\label{lem4.6}
   Let $m$ be the mobility satisfying the above conditions.
   Then the approximated mobility $m_\varepsilon$ is smooth and concave,
   and satisfies the pointwise bound $m \leq m_\varepsilon$.
   Moreover, $m_\varepsilon$ converges monotonically and uniformly to $m$ as $\varepsilon \to 0$.
\end{lem}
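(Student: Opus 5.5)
The argument splits into the two cases $M=\infty$ and $M<\infty$. In each case smoothness and concavity of $m_\varepsilon$ come directly from those of $m$. The bound $m\le m_\varepsilon$ and the convergence come from monotonicity of $m$ near the points where it is evaluated, together with uniform continuity.

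\emph{Case $M=\infty$.} Here $m_\varepsilon(r)=m(r+\varepsilon)$ is a translate of $m$, so it is smooth and concave on $[0,\infty)$, and $m_\varepsilon''(r)=m''(r+\varepsilon)\le0$. For the pointwise bound we need $m$ to be nondecreasing. Since $m$ is concave, $m'$ is nonincreasing. If $m'(r_0)<0$ for some $r_0$, then $m(r)\le m(r_0)+m'(r_0)(r-r_0)\to-\infty$, which contradicts $m\ge0$. Hence $m'\ge0$ on $[0,\infty)$, and $m(r)\le m(r+\varepsilon)=m_\varepsilon(r)$.

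The same fact gives monotonicity in $\varepsilon$: if $\varepsilon_1<\varepsilon_2$ then $m_{\varepsilon_1}\le m_{\varepsilon_2}$. For uniform convergence we use that $m'$ is nonincreasing and nonnegative, so $0\le m'(r)\le m'(0)$. Then
\[
0\le m_\varepsilon(r)-m(r)=\int_r^{r+\varepsilon}m'(s)\,ds\le m'(0)\,\varepsilon .
\]
Note that $m'(0)<\infty$ because $m$ is smooth on $[0,\infty)$, including at $r=0$. Alternatively one can use $\sup|m_\varepsilon'|<\infty$ from \eqref{M-LSC}, together with concavity.

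\emph{Case $M<\infty$.} Here $m_\varepsilon(r)=[(r+\varepsilon)(M-r+\varepsilon)]^\alpha$. The base $g_\varepsilon(r)=(r+\varepsilon)(M-r+\varepsilon)$ is a concave quadratic, and it is $\ge\varepsilon^2>0$ on $[0,M]$. So $m_\varepsilon$ is smooth on $[0,M]$, since the power is taken away from $0$. The map $t\mapsto t^\alpha$ with $0<\alpha\le1$ is concave and nondecreasing, so its composition with the positive concave $g_\varepsilon$ is concave.

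For the bound and the monotonicity, expand
\[
g_\varepsilon(r)=r(M-r)+\varepsilon M+\varepsilon^2 .
\]
This is strictly increasing in $\varepsilon$ and is $\ge r(M-r)$. Since $t^\alpha$ is increasing, $m\le m_\varepsilon$, and $m_\varepsilon$ decreases monotonically as $\varepsilon\downarrow0$. For uniform convergence we use the $\alpha$-Hölder continuity of $t^\alpha$, namely $|a^\alpha-b^\alpha|\le|a-b|^\alpha$ for $a,b\ge0$. This gives
\[
0\le m_\varepsilon(r)-m(r)\le(\varepsilon M+\varepsilon^2)^\alpha\to0 \quad\text{uniformly in } r\in[0,M].
\]

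\emph{Main obstacle.} The only step that is not immediate is, for $M=\infty$, showing that $m$ is nondecreasing; this is handled by the concavity-plus-nonnegativity argument above. If smoothness at $0$ is not available, uniform convergence can instead be obtained by splitting $[0,\infty)$. On $[\delta,\infty)$ we have $m'\le m'(\delta)$. On $[0,\delta]$ both $m$ and $m_\varepsilon$ are small, by continuity and $m(0)=0$.
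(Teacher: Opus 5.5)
Your proof is correct in substance and uses the same two-case structure as the paper. One step in the case $M=\infty$ needs fixing, though.

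\textbf{The case $M=\infty$.} Your main uniform estimate $0\le m_\varepsilon(r)-m(r)\le m'(0)\,\varepsilon$ needs $m'(0)<\infty$. This fails for the paper's own examples. The mobility $m(r)=r^\alpha$ with $0<\alpha<1$, which is exactly the one used for the Keller--Segel system in Section 4.1, has $m'(0)=+\infty$. So ``smooth'' in the paper can only mean smooth on $(0,\infty)$ and continuous at $0$.

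The alternative you mention via (M-LSC) does not work as stated either. Since $m'$ is nonnegative and nonincreasing, $\sup_{r\ge0}|m_\varepsilon'(r)|=m'(\varepsilon)$. This depends on $\varepsilon$ and gives no control of $m'$ on $[0,\varepsilon)$.

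Your splitting fallback is correct and should be promoted to the main argument. Choose $\delta$ with $m(2\delta)$ small. On $[0,\delta]$, both functions are bounded by $m(2\delta)$ once $\varepsilon\le\delta$. On $[\delta,\infty)$, use $m'\le m'(\delta)$. The paper's route is shorter still. Concavity makes chord slopes of fixed length nonincreasing, so $m(r+\varepsilon)-m(r)\le m(\varepsilon)-m(0)=m(\varepsilon)$ for every $r\ge0$. Uniform convergence then follows from continuity of $m$ at $0$ alone, with no information on $m'$ needed.

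\textbf{Where your argument adds detail.} Elsewhere your argument is more complete than the paper's:
\begin{itemize}
\item You prove that a nonnegative concave $m$ on $[0,\infty)$ is nondecreasing; the paper only asserts it.
\item For $M<\infty$ you justify smoothness and concavity of $m_\varepsilon$: the base is $\ge\varepsilon^2$, and $t^\alpha$ is concave and nondecreasing, composed with a concave quadratic. The paper calls this obvious ``by the smoothness and concavity of $m$''. That is not literally a reason, since there $m_\varepsilon$ is not a translate of $m$ and $m$ itself is not smooth at $0$ and $M$.
\item Your exact expansion $(r+\varepsilon)(M-r+\varepsilon)=r(M-r)+\varepsilon M+\varepsilon^2$, together with $|a^\alpha-b^\alpha|\le|a-b|^\alpha$, gives both the monotonicity in $\varepsilon$ and the bound $(\varepsilon M+\varepsilon^2)^\alpha$ in one step. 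The paper instead telescopes through $[(r+\varepsilon)(M-r)]^\alpha$ and obtains $[(M+\varepsilon)^\alpha+M^\alpha]\varepsilon^\alpha$. Both bounds are fine.
\end{itemize}
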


\begin{proof}
   The smoothness and concavity of $m_\varepsilon$ are obvious 
   because of its definition, the smoothness and concavity of $m$.

   When $M<\infty$, since the mobility is $m(r) = [r(M-r)]^\alpha\ (0<\alpha\leq 1)$
   and the approximated mobility is $m_\varepsilon(r) = [(r+\varepsilon)(M-r+\varepsilon)]^\alpha$,
   it is easy to get
   \begin{align*}
      m(r) 
      &= [r(M-r)]^\alpha\\
      &\leq [(r+\varepsilon_1)(M-r+\varepsilon_1)]^\alpha = m_{\varepsilon_1}(r)\\
      &\leq [(r+\varepsilon_2)(M-r+\varepsilon_2)]^\alpha = m_{\varepsilon_2}(r)\quad \mathrm{for}\ 0< \varepsilon_1< \varepsilon_2.
   \end{align*}
   In addition, due to the inequality
   \begin{align*}
      a^\alpha - b^\alpha \leq (a-b)^\alpha\quad \mathrm{for}\ 0\leq b\leq a,\ 0<\alpha\leq 1,
   \end{align*}
   we obtain
   \begin{align*}
      0&\leq m_{\varepsilon}(r) - m(r)\\
      &= \bigg([(r+\varepsilon)(M-r+\varepsilon)]^\alpha - [(r+\varepsilon)(M-r)]^\alpha\bigg)
       + \bigg([(r+\varepsilon)(M-r)]^\alpha - [r(M-r)]^\alpha\bigg)\\
      &\leq (r+\varepsilon)^\alpha\varepsilon^\alpha + \varepsilon^\alpha(M-r)^\alpha\\
      &\leq \left[(M+\varepsilon)^\alpha + M^\alpha\right]\varepsilon^\alpha\quad \mathrm{for\ all}\ r\in [0,M],
   \end{align*}
   which implies the uniform convergence as $\varepsilon\to0$.

   When $M=\infty$, 
   since $m$ is nonnegative and concave, that is, $m$ is monotonically increasing,
   it holds that
   \begin{align*}
      m(r) 
      &\leq m(r+\varepsilon_1) = m_{\varepsilon_1}(r)\\
      &\leq m(r+\varepsilon_2) = m_{\varepsilon_2}(r)\quad \mathrm{for}\ 0< \varepsilon_1 < \varepsilon_2.
   \end{align*}
   Moreover, since $m(0) = 0$ and $m$ is concave, that is, the function
   \begin{align*}
      s \mapsto \frac{m(\cdot+s) - m(\cdot)}{s}
   \end{align*}
   is monotonically decreasing, we have
   \begin{align*}
      0\leq m_\varepsilon(r) - m(r) &= \frac{m(r+\varepsilon) - m(r)}{\varepsilon}\varepsilon\\
      &\leq \frac{m(\varepsilon) - m(0)}{\varepsilon}\varepsilon = m(\varepsilon) \to 0\quad \mathrm{as}\ \varepsilon \to 0
   \end{align*}
   for all $r\in [0,\infty)$.
   The proof is completed.
\end{proof}

\subsection{Settings for main theorem}

\quad Let $\Omega$ be a bounded domain in $\mathbb{R}^d$ with the smooth boundary (we do not require the convexity).
Let $m$ and $m_\varepsilon$ be mobility and approximated mobility respectively introduced in Section 3.1.
Fix $\varphi \in C^\infty(\overline{\Omega})$ with $\nabla\varphi\cdot\boldsymbol{n} = 0$ on $\partial\Omega$ and $\delta>0$.
Let $\rho \in C^\infty(\bar{\Omega}\times[0,1])$ be a nonnegative smooth function and fix $z\in[0,1]$. 
For the smooth function $\rho(\cdot,z) \eqcolon \rho(z)$, by the standard argument (see, for instance, \cite[Propositions 4.1 and 4.2]{M}), 
we have a solution $S_t\rho(z)$ satisfying
\begin{align}
   &\bullet S_t\rho(z) \in C^\infty(\overline{\Omega}\times[0,\infty)),\notag\\
   &\bullet \partial_t (S_t\rho(z)) 
    = \delta\Delta (S_t\rho(z)) + \nabla\cdot(m_\varepsilon(S_t\rho(z))\nabla\varphi)
    \quad \mathrm{in}\ \Omega\times[0,\infty),\label{eq46}\\
   &\bullet \nabla S_t\rho(z)\cdot\boldsymbol{n} = 0\quad \mathrm{on}\ \partial\Omega\times(0,\infty),\notag\\
   &\bullet S_0\rho(z) = \rho(z)\quad \mathrm{in}\ \Omega,\notag\\
   &\bullet S_t\rho(z) \geq 0,\ \|S_t\rho(z)\|_{L^1(\Omega)} = \|\rho(z)\|_{L^1(\Omega)}
    \quad \mathrm{for}\ t \in [0,\infty),\notag\\
   &\bullet S_t\rho(z) \leq M\ \mathrm{if}\ M<\infty.\notag
\end{align}

In addition, define $\rho^h(z) \coloneq S_{hz}\rho(z)$ for $h\in(0,1)$.
Note that $\rho^h(z)$ is $z$-differentiable because of the smoothness of $\rho(z)$ and $S_t\rho(z)$.
Since $\|\rho^h(z)\|_{L^1(\Omega)} = \|\rho(z)\|_{L^1(\Omega)}$ for all $z\in[0,1]$ and $h>0$, we have
\begin{align*}
   \int_{\Omega} \partial_z\rho^h(z)\, dx = 0.
\end{align*}
Hence for fixed $z\in[0,1]$,  
we can find a unique weak solution $\phi^h(z) \in H^1(\Omega)$ of
\begin{empheq}[left={\empheqlbrace}]{alignat=2}\label{ee2}
      &\partial_z \rho^h(z) 
      = -\nabla\cdot(m_{\varepsilon}(\rho^h(z))\nabla\phi^h(z)) &\quad & \mathrm{in}\ \Omega,\\
      &\nabla\phi^h(z)\cdot\boldsymbol{n} = 0 &\quad & \mathrm{on}\ \partial\Omega,\notag
\end{empheq}
and the elliptic regularity theorem yields that $\phi^h \in C^\infty(\overline{\Omega}\times[0,1])$
and $\phi^h$ is $h$-differentiable.
We set up
\begin{equation*}
   \boldsymbol{A}^h(z) 
   \coloneq \int_{\Omega} m_{\varepsilon}(\rho^h(z))|\nabla\phi^h(z)|^2\, dx,
\end{equation*}
and define the functional 
$F : L^1\cap\mathcal{P}(\Omega) \to (-\infty, \infty]$ by 
\begin{align*}
   F(u) 
   \coloneq \int_{\Omega} u\varphi\, dx + \delta \int_{\Omega} U_\varepsilon(u)\, dx,
\end{align*}
where $\mathcal{P}(\Omega)$ is the set of Borel probability measures on $\Omega$ and 
$U_\varepsilon : [0,M] \to \mathbb{R}$ satisfies
\begin{align}\label{eq3.3}
   U_\varepsilon^{\prime\prime}(r)m_\varepsilon(r) = 1\quad \mathrm{for}\ 0\leq r \leq M.
\end{align}
Note that since $m_\varepsilon$ is smooth, $U_\varepsilon$ is also smooth.

\subsection{Main theorem and its proof}

\quad When the domain is convex, 
the following differential inequality \eqref{lem43} is established in \cite{LMS}.
The difference of our proof from the one in \cite{LMS} is to control the boundary integral ($\mathrm{J}_5$ in the proof).

\begin{thm}\label{thm3.1}
   Let $\sigma>0$ be a sufficiently small, which is determined by $\Omega$ and $\varepsilon$.
   Under the settings of Sections 3.1 and 3.2,
   the following differential inequality holds:
   \begin{equation}\label{lem43}
      \frac{1}{2}\dhh\boldsymbol{A}^h(z) 
      + \partial_zF(\rho^h(z)) 
      \leq -\lambda_\delta z \boldsymbol{A}^h(z)
      \quad \mathrm{for}\ z\in[0,1]\ \mathrm{and}\ h\in (0,1),
   \end{equation}
   where 
   \begin{align*}
      \lambda_{\delta} 
      &\coloneq -\frac{1}{\delta}\|\nabla\varphi\|_{L^\infty(\Omega)}^2\sup_{0<r<M}(m_\varepsilon(r)|m_\varepsilon^{\prime\prime}(r)|) 
       - \|\nabla^2\varphi\|_{L^\infty(\Omega)}\sup_{0<r<M}|m_\varepsilon^\prime(r)|
       - \frac{\delta C}{\sigma} \leq 0,
   \end{align*}
   $C > 0$ is a constant depending on $\Omega$ and
   $$\|\nabla^2\varphi\|_{L^\infty(\Omega)} 
   = \left\|\left(\sum_{i,j=1}^d \left(\partial_{ij}\varphi\right)^2\right)^{\frac{1}{2}}\right\|_{L^\infty(\Omega)}.$$
\end{thm}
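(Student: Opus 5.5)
I will follow the computation of \cite{LMS} and depart from it only at the boundary term. Write $\rho=\rho^h(z)$, $\phi=\phi^h(z)$, $m=m_\varepsilon$. Since $\partial_h\rho^h(z)=z\,\partial_t S_t\rho(z)|_{t=hz}$, the evolution equation \eqref{eq46} gives
\begin{align*}
   \partial_h\rho = z\left(\delta\Delta\rho + \nabla\cdot(m(\rho)\nabla\varphi)\right).
\end{align*}
Differentiate $\boldsymbol{A}^h(z)$ in $h$. Use \eqref{ee2} to eliminate $\partial_h\phi$: differentiating \eqref{ee2} in $h$ and testing with $\phi$, one finds
\begin{align*}
   \frac12\partial_h\boldsymbol{A}^h
   = -\frac12\int_\Omega m'(\rho)\,\partial_h\rho\,|\nabla\phi|^2\,dx + \int_\Omega \partial_z(\partial_h\rho)\,\phi\,dx .
\end{align*}
On the other hand,
\begin{align*}
   \partial_zF(\rho)=\int_\Omega(\varphi+\delta U'(\rho))\,\partial_z\rho\,dx=\int_\Omega m(\rho)\nabla\phi\cdot\nabla(\varphi+\delta U'(\rho))\,dx ,
\end{align*}
and by \eqref{eq3.3} we have $m\nabla U'(\rho)=\nabla\rho$. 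Commuting $\partial_z$ past $\partial_h$, inserting the formula for $\partial_h\rho$, and integrating by parts (all boundary terms from the Neumann conditions on $\rho,\phi,\varphi$ vanish except one), I will obtain the \cite{LMS} decomposition into terms $\mathrm{J}_1,\dots,\mathrm{J}_5$ times $z$. These are:
\begin{itemize}
   \item the $\delta$-diffusion part, which after Bochner's formula $\tfrac12\Delta|\nabla\phi|^2=|\nabla^2\phi|^2+\nabla\phi\cdot\nabla\Delta\phi$ yields $-\delta z\int_\Omega m|\nabla^2\phi|^2\,dx$, a term $-\delta z\int_\Omega m''(\rho)m(\rho)\ldots$ coming from the combination $(\nabla\cdot(m\nabla\phi))^2$ together with $m'$-terms (exactly as in \cite{LMS}), and the boundary integral
\begin{align*}
   \mathrm{J}_5=\frac{\delta z}{2}\int_{\partial\Omega}m(\rho)\,\nabla|\nabla\phi|^2\cdot\boldsymbol{n}\,dS ;
\end{align*}
   \item the drift part, giving terms bounded by $\|\nabla^2\varphi\|_\infty\sup|m'|\,z\boldsymbol{A}^h$, plus a term $z\int_\Omega m''m\,(\nabla\varphi\cdot\nabla\phi)\ldots$ that is absorbed via Young's inequality into the good $\delta$-term $-\delta z\int_\Omega|m''|m\ldots$, at the cost of $\tfrac1\delta\|\nabla\varphi\|_\infty^2\sup(m|m''|)\,z\boldsymbol{A}^h$. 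Concavity makes $m''\le 0$, so this sign is the right one.
\end{itemize}
The cross terms in $\partial_zF$ cancel against the $\int\partial_z\partial_h\rho\,\phi$ pieces, exactly as in the convex case. All of this is a transcription of \cite{LMS}; the only change is that I keep the negative Hessian term $-\delta z\int m|\nabla^2\phi|^2$ instead of discarding it.

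The main step is bounding $\mathrm{J}_5$. By Lemma \ref{lem2.1} (with $w=\phi$, which has $\nabla\phi\cdot\boldsymbol{n}=0$),
\begin{align*}
   \mathrm{J}_5\le \frac{\delta zC_\Omega}{2}\int_{\partial\Omega}|f|^2\,dS,\qquad f:=m(\rho)^{1/2}|\nabla\phi| .
\end{align*}
Fix $s\in(\frac12,1)$. Lemma \ref{lem2.2} followed by Lemma \ref{lem2.4} and Young's inequality gives
\begin{align*}
   \|f\|_{L^2(\partial\Omega)}^2\le \sigma\|\nabla f\|_{L^2}^2+\frac{C}{\sigma}\|f\|_{L^2}^2 ,
\end{align*}
using $s<1$ for the Young step, and $\|f\|_{L^2}^2=\boldsymbol{A}^h$. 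Now
\begin{align*}
   \nabla f=\tfrac12 m^{-1/2}m'\nabla\rho\,|\nabla\phi|+m^{1/2}\nabla|\nabla\phi| .
\end{align*}
By Lemma \ref{lem2.5}, $|\nabla|\nabla\phi||\le|\nabla^2\phi|$, so
\begin{align*}
   \|\nabla f\|_{L^2}^2\le 2\int_\Omega m|\nabla^2\phi|^2\,dx+\frac12\int_\Omega\frac{|m'|^2}{m}|\nabla\rho|^2|\nabla\phi|^2\,dx .
\end{align*}
Condition \eqref{M-A} converts the second integral into at most $K\int_\Omega|m''|\,|\nabla\rho|^2|\nabla\phi|^2\,dx$, which is precisely the form of the remaining negative $\delta$-term from \cite{LMS}.

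Choosing $\sigma$ small, depending on $C_\Omega$, $C_0$, the Gagliardo--Nirenberg constants and the \eqref{M-A} constant (hence on $\Omega$ and $\varepsilon$), the $\sigma$-part is absorbed by the two negative terms. What remains is $\frac{\delta C}{\sigma}z\boldsymbol{A}^h$. Collecting everything yields \eqref{lem43} with the stated $\lambda_\delta$.

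The obstacle I expect is verifying that the negative term involving $m''$ produced by the interior computation really controls $\int|m''||\nabla\rho|^2|\nabla\phi|^2$, and not just a mixed expression such as $\int |m''|\,m\,(\nabla\rho\cdot\nabla\phi)\ldots$. If only the mixed form appears, I will first try to rearrange the \cite{LMS} terms so that the full $|\nabla\rho|^2|\nabla\phi|^2$ quantity appears with a negative sign (using $m''\le0$ and completing a square with the Hessian term). Failing that, I will absorb the $m'$-part of $\nabla f$ directly into $-\delta\int m|\nabla^2\phi|^2$ together with the nonnegative square coming from $\nabla\cdot(m\nabla\phi)$.
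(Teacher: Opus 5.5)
Your proposal is correct and essentially reproduces the paper's proof. You keep the Hessian term $-z\delta\int_\Omega m_\varepsilon|\nabla^2\phi|^2\,dx$, bound $\mathrm{J}_5$ via Lemma \ref{lem2.1}, the trace inequality, fractional Gagliardo--Nirenberg with Young, Kato's inequality (Lemma \ref{lem2.5}) and \eqref{M-A}, then absorb for small $\sigma$. The obstacle you anticipate does not arise: the $\delta\Delta\rho$ part of $\partial_h\rho$ inside $-\frac12\int_\Omega m_\varepsilon'\,\partial_h\rho\,|\nabla\phi|^2\,dx$ produces, after one integration by parts and besides the pieces that feed Bochner's formula and $\mathrm{J}_5$, exactly $-\frac{z\delta}{2}\int_\Omega|m_\varepsilon''||\nabla\rho|^2|\nabla\phi|^2\,dx$. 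This term has no factor $m_\varepsilon$ and does not come from a square $(\nabla\cdot(m_\varepsilon\nabla\phi))^2$. Half of it absorbs the drift cross term at cost $\frac{z}{\delta}\|\nabla\varphi\|_{L^\infty}^2\sup(m_\varepsilon|m_\varepsilon''|)\boldsymbol{A}^h$, and the rest absorbs the $\sigma$-part of the boundary bound.
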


\begin{proof}
   Fix $z\in [0,1]$ and $h\in(0,1)$.
   First by \eqref{eq46} and \eqref{ee2}, we have
   \begin{align}\label{eq1}
      \dhh\rho^h(z) 
      &= z(\partial_tS_t\rho(z))_{t=hz}\notag\\
      &= \delta z \Delta\rho^h(z) + z\nabla\cdot(m_\varepsilon(\rho^h(z))\nabla\varphi),
   \end{align} 
   and
   \begin{align}\label{eq2}
      \partial_z\dhh\rho^h(z) 
      &= \delta\Delta\rho^h(z) 
       + \nabla\cdot(m_\varepsilon(\rho^h(z))\nabla\varphi)\notag\\
      &\quad - \delta z\Delta[\nabla\cdot(m_\varepsilon(\rho^h(z))\nabla\phi^h(z))]\notag\\
      &\quad - z\nabla\cdot[\nabla\cdot(m_\varepsilon(\rho^h(z))\nabla\phi^h(z))m_\varepsilon^\prime(\rho^h(z))\nabla\varphi].
   \end{align}
   Then, it follows from \eqref{ee2}, $\nabla\phi^h(z)\cdot\boldsymbol{n} = 0$ on $\partial\Omega$ 
   and the integration by parts that
   \begin{align*}
      \partial_zF(\rho^h(z)) 
      &= \int_{\Omega} \partial_z\rho^h(z)\varphi\, dx + \delta\int_{\Omega} U_\varepsilon^\prime(\rho^h(z))\partial_z\rho^h(z)\, dx\\
      &= -\int_{\Omega} \nabla\cdot(m_\varepsilon(\rho^h(z))\nabla\phi^h(z))\varphi\, dx\\
      &\quad -\delta\int_{\Omega} U_\varepsilon^\prime(\rho^h(z))\nabla\cdot(m_\varepsilon(\rho^h(z))\nabla\phi^h(z))\, dx\\
      &= \int_{\Omega} m(\rho^h(z))\nabla\varphi\cdot\nabla\phi^h(z)\, dx\\
      &\quad + \delta\int_{\Omega} U_\varepsilon^{\prime\prime}(\rho^h(z))m_\varepsilon(\rho^h(z))\nabla\phi^h(z)\cdot\nabla\rho^h(z)\, dx.
   \end{align*}
   Since 
   $U_\varepsilon^{\prime\prime}(\rho^h(t))m_\varepsilon(\rho^h(t)) = 1$
   due to the property \eqref{eq3.3},
   then we obtain
   \begin{align}\label{eq3}
      \partial_zF(\rho^h(z)) 
      &= \int_{\Omega} m_\varepsilon(\rho^h(z))\nabla\varphi\cdot\nabla\phi^h(z)\, dx
       + \delta\int_{\Omega} \nabla\rho^h(z)\cdot\nabla\phi^h(z)\, dx.
   \end{align}
   On the other hand, the direct calculation yields
   \begin{align*}
      \frac{1}{2}\dhh\boldsymbol{A}^h(z) 
      &= \frac{1}{2}\int_{\Omega} \partial_h \{m_\varepsilon(\rho^h(z))\}|\nabla\phi^h(z)|^2\, dx
       + \int_{\Omega} m_\varepsilon(\rho^h(z))\nabla\phi^h(z)\cdot\partial_h\nabla\phi^h(z)\, dx\\
      &= \int_{\Omega} [\partial_h \{m_\varepsilon(\rho^h(z))\}\nabla\phi^h(z) + m_\varepsilon(\rho^h(z))\partial_h\nabla\phi^h(z)]\cdot\nabla\phi^h(z)\, dx\\
      &\quad - \frac{1}{2}\int_{\Omega} \partial_h \{m_\varepsilon(\rho^h(z))\}|\nabla\phi^h(z)|^2\, dx\\
      &= -\frac{1}{2}\int_{\Omega} \dhh \{m_\varepsilon(\rho^h(z))\}|\nabla\phi^h(z)|^2\, dx
       + \int_{\Omega} \dhh[m_\varepsilon(\rho^h(z))\nabla\phi^h(z)]\cdot\nabla\phi^h(z)\, dx.
   \end{align*}
   Here, since $\nabla\phi^h(z)\cdot\boldsymbol{n} = 0$ on $\partial\Omega$ for all $h\in (0,1)$,
   it holds that
   \begin{align*}
      \partial_h\nabla\phi^h(z)\cdot\boldsymbol{n} = 0\quad \mathrm{on}\ \partial\Omega.
   \end{align*}
   Thus using the integration by parts, we have
   \begin{align}\label{eq4}
      \frac{1}{2}\dhh\boldsymbol{A}^h(z)
      &= -\frac{1}{2}\int_{\Omega} \dhh\rho^h(z)m_\varepsilon^\prime(\rho^h(z))|\nabla\phi^h(z)|^2\, dx
       - \int_{\Omega} \dhh[\nabla\cdot(m_\varepsilon(\rho^h(z))\nabla\phi^h(z))]\phi^h(z)\, dx\notag\\
      &= -\frac{1}{2}\int_{\Omega} \dhh\rho^h(z)m_\varepsilon^\prime(\rho^h(z))|\nabla\phi^h(z)|^2\, dx
       + \int_{\Omega} (\dhh\partial_z\rho^h(z))\phi^h(z)\, dx\notag\\
      &\eqcolon \mathrm{I}_1 + \mathrm{I}_2.
   \end{align}
   First, we calculate $\mathrm{I}_1$.
   It follows from
   \eqref{eq1}, $\nabla\rho^h(z)\cdot\boldsymbol{n} = 0, \nabla\varphi\cdot\boldsymbol{n} = 0$ on $\partial\Omega$ 
   and integration by parts that
   \begin{align*}
      \mathrm{I}_1
      &= -\frac{z}{2}\int_{\Omega} [\delta\Delta\rho^h(z) + \nabla\cdot(m_\varepsilon(\rho^h(z))\nabla\varphi)]m_\varepsilon^\prime(\rho^h(z))|\nabla\phi^h(z)|^2\, dx\\
      &= \frac{z}{2}\int_{\Omega} [\delta\nabla\rho^h(z) + m_\varepsilon(\rho^h(z))\nabla\varphi]\cdot\nabla[m_\varepsilon^\prime(\rho^h(z))|\nabla\phi^h(z)|^2]\, dx\\
      &= \frac{z\delta}{2}\int_{\Omega} m_\varepsilon^{\prime\prime}(\rho^h(z))|\nabla\rho^h(z)|^2|\nabla\phi^h(z)|^2\, dx\\
      &\quad + \frac{z\delta}{2}\int_{\Omega} m_\varepsilon^\prime(\rho^h(z))\nabla\rho^h(z)\cdot\nabla(|\nabla\phi^h(z)|^2)\, dx\\
      &\quad + \frac{z}{2}\int_{\Omega} m_\varepsilon(\rho^h(z))m_\varepsilon^{\prime\prime}(\rho^h(z))\nabla\varphi\cdot\nabla\rho^h(z)|\nabla\phi^h(z)|^2\, dx\\
      &\quad + z\int_{\Omega} m_\varepsilon(\rho^h(z))m_\varepsilon^\prime(\rho^h(z))\nabla\varphi\cdot\nabla\left(\frac{1}{2}|\nabla\phi^h(z)|^2\right)\, dx.
   \end{align*}
   Using $m_\varepsilon^\prime(\rho^h(z))\nabla\rho^h(z) = \nabla m_\varepsilon(\rho^h(z))$ and
   integration by parts again, we obtain
   \begin{align}\label{eq5}
      \mathrm{I}_1
      &= \frac{z\delta}{2}\int_{\Omega} m_\varepsilon^{\prime\prime}(\rho^h(z))|\nabla\rho^h(z)|^2|\nabla\phi^h(z)|^2\, dx\notag\\
      &\quad + \frac{z\delta}{2}\int_{\partial\Omega} m_\varepsilon(\rho^h(z))\nabla|\nabla\phi^h(z)|^2\cdot\boldsymbol{n}\, dS\notag\\
      &\quad - z\delta\int_{\Omega} m_\varepsilon(\rho^h(z))\Delta\left(\frac{1}{2}|\nabla\phi^h(z)|^2\right)\, dx\notag\\
      &\quad + \frac{z}{2}\int_{\Omega} m_\varepsilon(\rho^h(z))m_\varepsilon^{\prime\prime}(\rho^h(z))\nabla\varphi\cdot\nabla\rho^h(z)|\nabla\phi^h(z)|^2\, dx\notag\\
      &\quad + z\int_{\Omega} m_\varepsilon(\rho^h(z))m_\varepsilon^\prime(\rho^h(z))\nabla\varphi\cdot\nabla\left(\frac{1}{2}|\nabla\phi^h(z)|^2\right)\, dx.
   \end{align}
   Next for $\mathrm{I}_2$, we infer from \eqref{eq2} and integration by parts that
   \begin{align*}
      \mathrm{I}_2
      &= -\delta\int_{\Omega} \nabla\rho^h(z)\cdot\nabla\phi^h(z)\, dx
       - \int_{\Omega} m_\varepsilon(\rho^h(z))\nabla\varphi\cdot\nabla\phi^h(z)\, dx\\
      &\quad + z\int_{\Omega} \nabla\cdot[m_\varepsilon(\rho^h(z))\nabla\phi^h(z)]m_\varepsilon^\prime(\rho^h(z))\nabla\varphi\cdot\nabla\phi^h(z)\, dx\\
      &\quad -z\delta\int_{\Omega} \Delta[\nabla\cdot\{m_\varepsilon(\rho^h(z))\nabla\phi^h(z)\}]\phi^h(z)\, dx.
   \end{align*}
   Here, since
   $\nabla \rho^h(z)\cdot\boldsymbol{n} = 0$ on $\partial\Omega\times(0,\infty)$ for all $z\in [0,1]$ and
   $\rho^h(z)$ is smooth with respect to $z\in[0,1]$, it follows 
   \begin{align*}
      \nabla \partial_z\rho^h(z)\cdot\boldsymbol{n} = \partial_z\nabla\rho^h(z)\cdot\boldsymbol{n} = 0\quad \mathrm{on}\ \partial\Omega\times(0,\infty).
   \end{align*}
   Thus we infer from $\partial_z\rho^h(z) = -\nabla\cdot\{m_\varepsilon(\rho^h(z))\nabla\phi^h(z)\}$ in $\Omega$ 
   and integration by parts that 
   \begin{align*}
      z\delta\int_{\Omega} \Delta[-\nabla\cdot\{m_\varepsilon(\rho^h(z))\nabla\phi^h(z)\}]\phi^h(z)\, dx
      &= z\delta\int_{\Omega} \Delta[\partial_z\rho^h(z)]\phi^h(z)\, dx\\
      &= -z\delta\int_{\Omega} \nabla\partial_z\rho^h(z)\cdot\nabla\phi^h(z)\, dx\\
      &= z\delta\int_{\Omega} \partial_z\rho^h(z)\Delta\phi^h(z)\, dx\\
      &= -z\delta\int_{\Omega} \nabla\cdot[m_\varepsilon(\rho^h(z))\nabla\phi^h(z)]\Delta\phi^h(z)\, dx.
   \end{align*}
   Hence using \eqref{eq3}, integration by parts 
   and $\nabla\phi^h(z)\cdot\boldsymbol{n} = 0$ on $\partial\Omega$, we obtain
   \begin{align}\label{eq53}
      \mathrm{I}_2
      &= -\partial_zF(\rho^h(z))\notag\\
      &\quad - z\int_{\Omega} m_\varepsilon(\rho^h(z))\nabla\phi^h(z)\cdot\nabla[m_\varepsilon^\prime(\rho^h(z))\nabla\varphi\cdot\nabla\phi^h(z)]\, dx\notag\\
      &\quad + z\delta\int_{\Omega} m_\varepsilon(\rho^h(z))\nabla\phi^h(z)\cdot\nabla(\Delta\phi^h(z))\, dx\notag\\
      &= -\partial_zF(\rho^h(z))\notag\\
      &\quad - z\int_{\Omega} m_\varepsilon(\rho^h(z))m_\varepsilon^{\prime\prime}(\rho^h(z))[\nabla\phi^h(z)\cdot\nabla\rho^h(z)][\nabla\varphi\cdot\nabla\phi^h(z)]\, dx\notag\\
      &\quad - z\int_{\Omega} m_\varepsilon(\rho^h(z))m_\varepsilon^\prime(\rho^h(z))\nabla\phi^h(z)\cdot\nabla[\nabla\varphi\cdot\nabla\phi^h(z)]\, dx\notag\\
      &\quad + z\delta\int_{\Omega} m_\varepsilon(\rho^h(z))\nabla\phi^h(z)\cdot\nabla(\Delta\phi^h(z))\, dx.
   \end{align}
   Combining \eqref{eq4} with \eqref{eq5} and \eqref{eq53}, we have
   \begin{align*}
      &\frac{1}{2}\dhh\boldsymbol{A}^h(z) + \partial_zF(\rho^h(z))\\
      &= \frac{z\delta}{2}\int_{\Omega} m_\varepsilon^{\prime\prime}(\rho^h(z))|\nabla\rho^h(z)|^2|\nabla\phi^h(z)|^2\, dx\\
      &\quad + z\delta\int_{\Omega} m_\varepsilon(\rho^h(z))\left[-\Delta\left(\frac{1}{2}|\nabla\phi^h(z)|^2\right) + \nabla\phi^h(z)\cdot\nabla(\Delta\phi^h(z))\right]\, dx\\
      &\quad + \frac{z}{2}\int_{\Omega} m_\varepsilon(\rho^h(z))m_\varepsilon^{\prime\prime}(\rho^h(z))\bigg[\nabla\varphi\cdot\nabla\rho^h(z)|\nabla\phi^h(z)|^2\\ 
       &\hspace{6cm} - 2\{\nabla\phi^h(z)\cdot\nabla\rho^h(z)\}\{\nabla\varphi\cdot\nabla\phi^h(z)\}\bigg]\, dx\\
      &\quad + z\int_{\Omega} m_\varepsilon(\rho^h(z))m_\varepsilon^\prime(\rho^h(z))\bigg[\nabla\varphi\cdot\nabla\left(\frac{1}{2}|\nabla\phi^h(z)|^2\right)
        - \nabla\phi^h(z)\cdot\nabla\{\nabla\varphi\cdot\nabla\phi^h(z)\}\bigg]\, dx\\
      &\quad +\frac{z\delta}{2}\int_{\partial\Omega} m_\varepsilon(\rho^h(z))\nabla(|\nabla\phi^h(z)|^2)\cdot\boldsymbol{n}\, dS\\
      &\eqcolon \mathrm{J}_1 + \mathrm{J}_2 + \mathrm{J}_3 + \mathrm{J}_4 + \mathrm{J}_5.
   \end{align*}
   First, since $m_\varepsilon^{\prime\prime} \leq 0$ ($m_\varepsilon$ is concave), it holds that
   \begin{align*}
      \mathrm{J}_1
      = -\frac{z\delta}{2}\int_{\Omega} |m_\varepsilon^{\prime\prime}(\rho^h(z))||\nabla\rho^h(z)|^2|\nabla\phi^h(z)|^2\, dx.
   \end{align*}
   Next, by the Bochner formula, it follows that
   \begin{equation*}
      -\Delta\left(\frac{1}{2}|\nabla\phi^h(z)|^2\right) + \nabla\phi^h(z)\cdot\nabla(\Delta\phi^h(z)) = -|\nabla^2\phi^h(z)|^2,
   \end{equation*}
   which implies
   \begin{align*}
      \mathrm{J}_2 
      = -z\delta\int_{\Omega} m_\varepsilon(\rho^h(z))|\nabla^2\phi^h(z)|^2\, dx,
   \end{align*}
   where $\nabla^2\phi^h(z)$ denotes the Hessian matrix of $\phi^h(z)$.
   Moreover, simple calculations yield 
   \begin{align*}
      &|\nabla\varphi\cdot\nabla\rho^h(z)|\nabla\phi^h(z)|^2 - 2\{\nabla\phi^h(z)\cdot\nabla\rho^h(z)\}\{\nabla\varphi\cdot\nabla\phi^h(z)\}|\\
      &\leq |\nabla\varphi||\nabla\phi^h(z)|^2|\nabla\rho^h(z)|\\
      &\leq 2|\nabla\varphi||\nabla\phi^h(z)|^2|\nabla\rho^h(z)|
   \end{align*}
   and
   \begin{align*}
      &\left|\nabla\varphi\cdot\nabla\left(\frac{1}{2}|\nabla\phi^h(z)|^2\right) - \nabla\phi^h(z)\cdot\nabla\{\nabla\varphi\cdot\nabla\phi^h(z)\}\right|\\
      &= |\nabla\varphi\cdot[\nabla^2\phi^h(z)\nabla\phi^h(z)] - \nabla\phi^h(z)\cdot[\nabla^2\varphi\nabla\phi^h(z) + \nabla^2\phi^h(z)\nabla\varphi]|\\ 
      &\leq \|\nabla^2\varphi\|_{L^\infty(\Omega)}|\nabla\phi^h(z)|^2.
   \end{align*}
   Hence $\mathrm{J}_3$ and $\mathrm{J}_4$ can be estimated as follows:
   \begin{align*}
      &\mathrm{J}_3
      \leq z\int_{\Omega} m_\varepsilon(\rho^h(z))|m_\varepsilon^{\prime\prime}(\rho^h(z))||\nabla\varphi||\nabla\phi^h(z)|^2|\nabla\rho^h(z)|\, dx,\\
      &\mathrm{J}_4
      \leq z\int_{\Omega} m_\varepsilon(\rho^h(z))|m_\varepsilon^\prime(\rho^h(z))|\|\nabla^2\varphi\|_{L^\infty(\Omega)}|\nabla\phi^h(z)|^2\, dx.
   \end{align*}
   Finally we control the boundary integral.
   It follows from Lemma \ref{lem2.1} that
   \begin{equation*}
      \nabla(|\nabla\phi^h(z)|^2)\cdot\boldsymbol{n}
      \leq C_{\Omega} |\nabla\phi^h(z)|^2\quad \mathrm{on}\ \partial\Omega.
   \end{equation*}
   Moreover using Lemma \ref{lem2.2}, we have
   \begin{align*}
      \|m_\varepsilon(\rho^h(z))^{\frac{1}{2}}|\nabla\phi^h(z)|\|_{L^2(\partial\Omega)}^2
      &\leq C_0^2\|m_\varepsilon(\rho^h(z))^{\frac{1}{2}}|\nabla\phi^h(z)|\|_{W^{s,2}(\Omega)}^2,
   \end{align*}
   where $\frac{1}{2} < s < 1$.
   It follows from the fractional Gagliardo--Nirenberg inequality (Lemma \ref{lem2.4})
   and Young's inequality that
   \begin{align*}
      &\|m_\varepsilon(\rho^h(z))^{\frac{1}{2}}|\nabla\phi^h(z)|\|_{W^{s,2}(\Omega)}^2\\
      &\leq 2C_1^2\|\nabla(m_\varepsilon(\rho^h(z))^{\frac{1}{2}}|\nabla\phi^h(z)|)\|_{L^2(\Omega)}^{2s}\|m_\varepsilon(\rho^h(z))^{\frac{1}{2}}|\nabla\phi^h(z)|\|_{L^2(\Omega)}^{2(1-s)}\\
      &\hspace{7cm} + 2C_2^2\|m_\varepsilon(\rho^h(z))^{\frac{1}{2}}|\nabla\phi^h(z)|\|_{L^2(\Omega)}^2\\
      &\leq \sigma\|\nabla(m_\varepsilon(\rho^h(z))^{\frac{1}{2}}|\nabla\phi^h(z)|)\|_{L^2(\Omega)}^2 + \sigma^{-1}\bar{C}\|m_\varepsilon(\rho^h(z))^{\frac{1}{2}}|\nabla\phi^h(z)|\|_{L^2(\Omega)}^2
   \end{align*}
   for $\sigma > 0$,
   where $\bar{C}>0$ is a constant.
   Here since $\rho^h(z)$ and $\phi^h(z)$ are smooth, we can calculate 
   \begin{align*}
      \nabla(m_\varepsilon(\rho^h(z))^{\frac{1}{2}}|\nabla\phi^h(z)|)
      = \frac{m_\varepsilon^\prime(\rho^h(z))\nabla\rho^h(z)}{2m_\varepsilon(\rho^h(z))^{\frac{1}{2}}}|\nabla\phi^h(z)|
       + m_\varepsilon(\rho^h(z))^{\frac{1}{2}}\nabla(|\nabla\phi^h(z)|)\quad \mathrm{a.e.\ in}\ \Omega.
   \end{align*}
   Hence we infer from Lemma \ref{lem2.5} that
   \begin{align*}
      \|\nabla(m_\varepsilon(\rho^h(z))^{\frac{1}{2}}|\nabla\phi^h(z)|)\|_{L^2(\Omega)}^2
      &\leq 2\int_{\Omega} \frac{(m_\varepsilon^\prime(\rho^h(z)))^2}{4m_\varepsilon(\rho^h(z))}|\nabla\rho^h(z)|^2|\nabla\phi^h(z)|^2\, dx\\
      &\quad + 2\int_{\Omega} m_\varepsilon(\rho^h(z))|\nabla^2\phi^h(z)|^2\, dx.
   \end{align*}
   In the end, since $m_\varepsilon$ satisfies the condition \eqref{M-A}, 
   the boundary integral $\mathrm{J}_5$ is dominated by
   \begin{align*}
      \mathrm{J}_5
      &\leq \frac{z\delta\tilde{C}_\Omega\sigma}{4}\sup_{0<r<M}\frac{(m_\varepsilon^\prime(r))^2}{m_\varepsilon(r)|m_\varepsilon^{\prime\prime}(r)|}
       \int_{\Omega} |m_\varepsilon^{\prime\prime}(\rho^h(z))||\nabla\rho^h(z)|^2|\nabla\phi^h(z)|^2\, dx\\
      &\quad + z\delta\tilde{C}_\Omega\sigma\int_{\Omega} m_\varepsilon(\rho^h(z))|\nabla^2\phi^h(z)|^2\, dx\\
      &\quad + \frac{z\delta\tilde{C}_\Omega\bar{C}}{\sigma}\int_{\Omega} m_\varepsilon(\rho^h(z))|\nabla\phi^h(z)|^2\, dx,
   \end{align*}
   where $\tilde{C}_\Omega = C_\Omega C_0^2>0$.
   Using the above estimates and Young's inequality, we obtain
   \begin{align*}
      &\frac{1}{2}\dhh\boldsymbol{A}^h(z) + \partial_zF(\rho^h(z))\\
      &\leq \mathrm{J}_1 + \mathrm{J}_2 + \mathrm{J}_3 + \mathrm{J}_4 + \mathrm{J}_5\\
      &\leq -\frac{z\delta}{2}\int_{\Omega} |m_\varepsilon^{\prime\prime}(\rho^h(z))||\nabla\rho^h(z)|^2|\nabla\phi^h(z)|^2\, dx\\
      &\quad -z\delta\int_{\Omega} m_\varepsilon(\rho^h(z))|\nabla^2\phi^h(z)|^2\, dx\\
      &\quad + z\int_{\Omega} \bigg[\left(\frac{1}{2}\delta\right)^{\frac{1}{2}}|m_\varepsilon^{\prime\prime}(\rho^h(z))|^{\frac{1}{2}}|\nabla\phi^h(z)||\nabla\rho^h(z)|\\
       &\hspace{3cm} \times \left(\frac{1}{2}\delta\right)^{-\frac{1}{2}}m_\varepsilon(\rho^h(z))|m_\varepsilon^{\prime\prime}(\rho^h(z))|^{\frac{1}{2}}|\nabla\varphi||\nabla\phi^h(z)|\bigg]\, dx\\
      &\quad + z\int_{\Omega} m_\varepsilon(\rho^h(z))|m_\varepsilon^\prime(\rho^h(z))|\|\nabla^2\varphi\|_{L^\infty(\Omega)}|\nabla\phi^h(z)|^2\, dx\\
      &\quad + \frac{z\delta \tilde{C}_\Omega\sigma}{4}\sup_{0<r<M}\frac{(m_\varepsilon^\prime(r))^2}{m_\varepsilon(r)|m_\varepsilon^{\prime\prime}(r)|}
       \int_{\Omega} |m_\varepsilon^{\prime\prime}(\rho^h(z))||\nabla\rho^h(z)|^2|\nabla\phi^h(z)|^2\, dx\\
      &\quad + z\delta\tilde{C}_\Omega\sigma\int_{\Omega} m_\varepsilon(\rho^h(z))|\nabla^2\phi^h(z)|^2\, dx\\
      &\quad + \frac{z\delta\tilde{C}_\Omega\bar{C}}{\sigma}\int_{\Omega} m_\varepsilon(\rho^h(z))|\nabla\phi^h(z)|^2\, dx\\
      &\leq -\frac{z\delta}{4}\left[2 - 1 - \tilde{C}_\Omega\sigma\sup_{0<r<M}\frac{(m_\varepsilon^\prime(r))^2}{m_\varepsilon(r)|m_\varepsilon^{\prime\prime}(r)|}\right]\int_{\Omega} |m_\varepsilon^{\prime\prime}(\rho^h(z))||\nabla\rho^h(z)|^2|\nabla\phi^h(z)|^2\, dx\\
      &\quad - z\delta\left[1 - \tilde{C}_\Omega\sigma\right]\int_{\Omega} m_\varepsilon(\rho^h(z))|\nabla^2\phi^h(z)|^2\, dx\\
      &\quad + \frac{z}{\delta}\int_{\Omega} |m_\varepsilon(\rho^h(z))|^2|m_\varepsilon^{\prime\prime}(\rho^h(z))||\nabla\varphi|^2|\nabla\phi^h(z)|^2\, dx\\
      &\quad + z\int_{\Omega} m_\varepsilon(\rho^h(z))|m_\varepsilon^\prime(\rho^h(z))|\|\nabla^2\varphi\|_{L^\infty(\Omega)}|\nabla\phi^h(z)|^2\, dx\\
      &\quad + \frac{z\delta\tilde{C}_\Omega\bar{C}}{\sigma}\int_{\Omega} m_\varepsilon(\rho^h(z))|\nabla\phi^h(z)|^2\, dx.
   \end{align*}
   If $\sigma > 0$ satisfies 
   \begin{align*}
      \sigma < \frac{1}{\tilde{C}_\Omega}\min\left\{1,\left(\sup_{0<r<M}\frac{(m_\varepsilon^\prime(r))^2}{m_\varepsilon(r)|m_\varepsilon^{\prime\prime}(r)|}\right)^{-1}\right\},
   \end{align*} 
   then by the assumption \eqref{M-LSC}, it holds that
   \begin{align*}
      &\frac{1}{2}\dhh\boldsymbol{A}^h(z) + \partial_zF(\rho^h(z))\\
      &\leq \frac{z}{\delta}\int_{\Omega} |m_\varepsilon(\rho^h(z))|^2|m_\varepsilon^{\prime\prime}(\rho^h(z))||\nabla\varphi|^2|\nabla\phi^h(z)|^2\, dx\\
      &\quad + z\int_{\Omega} m_\varepsilon(\rho^h(z))|m_\varepsilon^\prime(\rho^h(z))|\|\nabla^2\varphi\|_{L^\infty(\Omega)}|\nabla\phi^h(z)|^2\, dx\\
      &\quad + \frac{z\delta\tilde{C}_\Omega\bar{C}}{\sigma}\int_{\Omega} m_\varepsilon(\rho^h(z))|\nabla\phi^h(z)|^2\, dx\\
      &\leq - z\left(-\frac{\|\nabla\varphi\|_{L^\infty(\Omega)}^2}{\delta}\sup_{0<r<M}(m_\varepsilon(r)|m_\varepsilon^{\prime\prime}(r)|) - \|\nabla^2\varphi\|_{L^\infty(\Omega)}\sup_{0<r<M}|m_\varepsilon^\prime(r)| - \frac{\delta\tilde{C}_\Omega\bar{C}}{\sigma}\right)\boldsymbol{A}^h(z).
   \end{align*}
   Therefore, putting
   \begin{align*}
      \lambda_{\delta} 
      &\coloneq -\frac{\|\nabla\varphi\|_{L^\infty(\Omega)}^2}{\delta}\sup_{0<r<M}(m_\varepsilon(r)|m_\varepsilon^{\prime\prime}(r)|) 
       - \|\nabla^2\varphi\|_{L^\infty(\Omega)}\sup_{0<r<M}|m_\varepsilon^\prime(r)|
       - \frac{\delta C}{\sigma} \leq 0
   \end{align*}
   where $C = \tilde{C}_\Omega\bar{C}>0$, we conclude
   \begin{align*}
      \frac{1}{2}\dhh\boldsymbol{A}^h(z) + \partial_zF(\rho^h(z)) 
      \leq -\lambda_{\delta} z \boldsymbol{A}^h(z).
   \end{align*}
   The proof is completed.
\end{proof}

\begin{cor}[The EVI for the weighted Wasserstein distance]\label{cor3.4}
   Under the same assumptions of Theorem \ref{thm3.1},
   the evolution variational inequality for the weighted Wasserstein distance holds:
   \begin{align*}
      \frac{1}{2}\limsup_{h\downarrow0}\frac{W_{m_\varepsilon}(S_h(u),v)^2 - W_{m_\varepsilon}(u,v)^2}{h}
      + \frac{\lambda_\delta}{2}W_{m_\varepsilon}(u,v)^2
      \leq F(v) - F(u),
   \end{align*}
   for $u,v \in L^1\cap\mathcal{P}(\Omega)$ with $W_{m_\varepsilon}(u,v), F(u), F(v) < \infty$,
   where $\lambda_\delta$ is defined in Theorem \ref{thm3.1}.
\end{cor}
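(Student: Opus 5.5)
We derive the EVI from the differential inequality \eqref{lem43} of Theorem \ref{thm3.1}, following the scheme of \cite{LMS} (see also \cite[Theorem 2.2]{DS}), since Theorem \ref{thm3.1} is exactly the action-derivative estimate needed there.

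\textbf{Step 1: smooth curves and integration of \eqref{lem43} in $z$.} Fix $u,v$ as in the statement. By the approximation result for the weighted Wasserstein distance (\cite[Proposition 2.2]{LMS}, \cite[Lemma 2.8]{M}), for every $\eta>0$ we choose a smooth nonnegative curve $\rho\in C^\infty(\overline{\Omega}\times[0,1])$ with the following properties: $\rho(0)$ is close to $v$ and $\rho(1)$ is close to $u$; the corresponding $\phi(z)$ solving \eqref{ee2} at $h=0$ satisfies $\int_0^1\boldsymbol{A}^0(z)\,dz\le W_{m_\varepsilon}(u,v)^2+\eta$; and $F(\rho(z))$ approximates $F$ at the endpoints. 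We then form $\rho^h(z)=S_{hz}\rho(z)$. This is a curve joining $\rho^h(0)=\rho(0)$ to $\rho^h(1)=S_h\rho(1)$, and $(\rho^h,m_\varepsilon(\rho^h)\nabla\phi^h)$ solves the continuity equation. Hence
\begin{align*}
W_{m_\varepsilon}(S_h\rho(1),\rho(0))^2\le \int_0^1\boldsymbol{A}^h(z)\,dz .
\end{align*}
Multiplying \eqref{lem43} by $e^{2\lambda_\delta hz}$ turns it into
\begin{align*}
\tfrac12\partial_h\bigl(e^{2\lambda_\delta hz}\boldsymbol{A}^h(z)\bigr)\le -e^{2\lambda_\delta hz}\partial_zF(\rho^h(z)).
\end{align*}
We integrate this over $h\in(0,\bar h)$ and $z\in[0,1]$ and swap the order of integration, handling the $z$-derivative term by an integration by parts in $z$. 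This yields
\begin{align*}
\tfrac12 e^{\lambda_\delta \bar h}W(S_{\bar h}\rho(1),\rho(0))^2-\tfrac12 W(\rho(1),\rho(0))^2\lesssim \int_0^{\bar h}\bigl(F(\rho(0))-F(S_h\rho(1))\bigr)\,dh+\text{(error)}.
\end{align*}
The error is of order $\bar h^2$ or $\eta$, up to the exponential weight. This is the standard computation producing the integrated EVI with constant $\lambda_\delta$.

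\textbf{Step 2: passage to general data.} We let $\eta\to0$, using lower semicontinuity of $W_{m_\varepsilon}$ and of $F$ (convexity of $U_\varepsilon$, since $U_\varepsilon''=1/m_\varepsilon>0$). We also need continuity of $S_h$ with respect to the data and monotonicity of $F$ along $S_h$. The latter holds because $S_t$ is the gradient flow of $F$: $\tfrac{d}{dt}F(S_t\rho)\le0$ by an energy identity obtained from \eqref{eq46} together with \eqref{eq3.3}. With these facts the integrated inequality extends to $u,v$ with finite $F$ and finite distance. We then divide by $\bar h$ and take the $\limsup$ as $\bar h\downarrow0$, which gives the stated differential form. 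On the right-hand side we use $F(S_h u)\to F(u)$ together with lower semicontinuity.

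\textbf{Main obstacle.} The delicate point is the approximation in Step 1. We need smooth, positive curves that nearly realize $W_{m_\varepsilon}(u,v)$ and whose endpoint values of $F$ converge, while the data satisfy only $F(u),F(v)<\infty$. We would obtain these by regularizing the endpoints with the heat/Neumann semigroup and by mollifying a near-optimal curve in space and time, taking care with the Neumann condition in the non-convex domain. The relevant properties would be taken from \cite[Proposition 2.2]{LMS} and \cite[Lemma 2.8]{M}; we expect these arguments to use no convexity, since convexity entered only through $\mathrm{J}_5$, which Theorem \ref{thm3.1} now controls.
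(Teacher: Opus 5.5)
Your proposal follows the paper's route: the paper's proof simply reruns the argument of \cite[Lemma 3.3]{LMS} (or \cite[Lemma 4.8]{M}), namely smooth approximating curves, the perturbed curves $\rho^h(z)=S_{hz}\rho(z)$, integration of \eqref{lem43} in $z$ and $h$, and passage to general data, with Theorem \ref{thm3.1} in place of the convex-domain action estimate. One detail in Step 1 should be made explicit. To obtain $\frac{\lambda_\delta}{2}$ rather than $\lambda_\delta$, bound $\int_0^1 e^{2\lambda_\delta\bar h z}\boldsymbol{A}^{\bar h}(z)\,dz$ from below by $W_{m_\varepsilon}^2\big/\int_0^1 e^{-2\lambda_\delta\bar h z}\,dz=(1+\lambda_\delta\bar h+O(\bar h^2))\,W_{m_\varepsilon}^2$, using $W_{m_\varepsilon}(S_{\bar h}\rho(1),\rho(0))\le\int_0^1\boldsymbol{A}^{\bar h}(z)^{1/2}\,dz$ together with Cauchy--Schwarz; the pointwise bound $e^{2\lambda_\delta\bar h z}\ge e^{2\lambda_\delta\bar h}$ combined with $W_{m_\varepsilon}^2\le\int_0^1\boldsymbol{A}^{\bar h}\,dz$ would lose a factor of two.
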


\begin{proof}
   Combining the arguments in \cite[Lemma 3.3]{LMS} or \cite[Lemma 4.8]{M} with Theorem \ref{thm3.1},
   we complete the proof.
\end{proof}

\section{Application}

\quad In this section,
we apply Theorem \ref{thm3.1} (Corollary \ref{cor3.4}) to the minimizing movement in weighted Wasserstein metrics 
for specific equations.
The applications are the global existence of weak solutions
to the equations, which have the gradient flow structures in weighted Wasserstein metrics,
such as Keller--Segel systems and Cahn--Hilliard type equations 
in bounded non-convex domains.

\subsection{Quasilinear Keller--Segel systems}

\quad We consider the following quasilinear Keller--Segel system with nonlinear mobility:
\begin{align}\label{kspp}
    \begin{cases}
      \partial_t u = \Delta u^p - \nabla\cdot(\chi u^\alpha \nabla v)
       &\mathrm{in}\ \Omega\times(0,\infty),\\
      \dt v = \Delta v - v + u 
       &\mathrm{in}\ \Omega\times(0,\infty),\\
      \nabla u\cdot \boldsymbol{n} = \nabla v\cdot \boldsymbol{n} = 0
       &\mathrm{on}\ \partial\Omega\times(0,\infty),\\
      u(\cdot,0) = u_0(\cdot),\ v(\cdot,0) = v_0(\cdot)
       &\mathrm{in}\ \Omega,
    \end{cases}
\end{align}
where 
$\Omega$ is a bounded domain in $\Rd$ with the smooth boundary,
$d\geq2$, $p\geq1$, $0<\alpha<1$, $\chi>0$ and $\boldsymbol{n}$ is the outer unit normal vector to $\partial\Omega$.
In addition, 
$u_0 \in L^{p+1-\alpha}\cap\mathcal{P}(\Omega)$ is a nonnegative function
and $v_0 \in H^1(\Omega)$ is also a nonnegative function.

\quad The Keller--Segel system is a model to describe an aggregation phenomenon of cellular slime molds with chemotaxis.
In particular, the system \eqref{kspp} takes into account a volume-filling effect
by assuming $0<\alpha<1$ (if $\alpha=1$ then \eqref{kspp} is the well-known Keller--Segel system).
Here, the volume-filling effect is a phenomenon that movements of cells are restricted by presence of other cells
(weaker aggregation).

\quad There are some results about the existence of solutions to \eqref{kspp}.
For instance, in \cite{ISY}, the authors proved the existence of global weak solutions to \eqref{kspp}
in the sub-critical case $p>1+\alpha-2/d$ when $\Omega$ is a bounded non-convex domain.
Note that they did not use the minimizing movement in weighted Wasserstein metrics.
On the other hand, in \cite{M}, it is shown that there exist global weak solutions to \eqref{kspp}
in the case $p\geq1+\alpha-2/d$ (assuming $\chi>0$ is small enough if $p=1+\alpha-2/d$) 
by the minimizing movement in weighted Wasserstein metrics when $\Omega$ is a bounded convex domain.
Here the assumption of convexity of the domain comes from the evolution variational inequality.
Thus, combining Theorem \ref{thm3.1} with the argument of the minimizing movement 
in the weighted Wasserstein space in \cite{M},
we can prove the existence of weak solutions to the degenerate Keller--Segel system \eqref{kspp} 
in the case $p\geq1+\alpha-2/d$ (and $\chi$ is small enough if $p=1+\alpha-2/d$) 
without assuming the convexity of $\Omega$.
Moreover, we emphasize that this existence result (degenerate, the critical case and the non-convexity) is new.

\begin{thm}\label{thm4.1}
   Let $1+\alpha -2/d < p \leq 1+\alpha$ or
   assume $p=1+\alpha-2/d$ and $\chi>0$ is small enough.
   Then for all $T>0$, there exists a nonnegative weak solution $(u,v)$ to \eqref{kspp}
   on the time interval $[0,T]$ satisfying
   \begin{align*}
      &\bullet u \in L^\infty((0,T);L^{p+1-\alpha}(\Omega)),\ u^{\frac{p+1-\alpha}{2}} \in L^2((0,T);H^1(\Omega)),\\
      &\bullet \|u(t)\|_{L^1(\Omega)} = 1\quad \mathrm{for}\ t \in [0,T],\\
      &\bullet v \in L^\infty((0,T);H^1(\Omega))\cap L^2((0,T);H^2(\Omega))\cap C^{\frac{1}{2}}([0,T];L^2(\Omega)),\\
      &\bullet \lim_{t\to0}W_m(u(t),u_0) = 0\ \mathrm{and}\ \lim_{t\to0}\|v(t) - v_0\|_{L^2(\Omega)} = 0,
   \end{align*}
   where $W_m$ is the weighted Wasserstein distance 
   and $m(u) = u^\alpha$, moreover
   \begin{align*}
      &\int_0^T\int_{\Omega} (\nabla u^p - \chi u^\alpha\nabla v)\cdot\nabla\varphi\, dx\,dt 
       = \int_{\Omega} (u_0 - u(\cdot,T))\varphi\, dx,\\
      &\int_0^T\int_{\Omega} \left[\nabla v\cdot\nabla\zeta + v\zeta - u\zeta\right]\, dx\, dt
       = \int_{\Omega} (v_0 - v(\cdot,T))\zeta\, dx,
   \end{align*}
   for all $\varphi \in C^\infty(\overline{\Omega})$ 
   with $\nabla \varphi\cdot\boldsymbol{n} = 0$ on $\partial\Omega$
   and $\zeta \in H^1(\Omega)$.
\end{thm}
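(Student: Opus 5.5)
The plan is to follow the minimizing-movement argument of \cite{M} for \eqref{kspp} essentially verbatim. The only place \cite{M} uses convexity of $\Omega$ is the flow interchange step, and there we substitute Theorem \ref{thm3.1} and Corollary \ref{cor3.4}.

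\textbf{Scheme and uniform bounds.} Set $m(u)=u^\alpha$ and work first with $m_\varepsilon(r)=(r+\varepsilon)^\alpha$. This mobility satisfies \eqref{M-LSC} and \eqref{M-A}: indeed $(m_\varepsilon')^2/(m_\varepsilon|m_\varepsilon''|)=\alpha/(1-\alpha)$. Define the iterates $(u_\tau^k,v_\tau^k)$ by minimizing
\begin{align*}
\frac{1}{2\tau}W_{m_\varepsilon}(u,u_\tau^{k-1})^2+\frac{1}{2\tau\chi}\|v-v_\tau^{k-1}\|_{L^2(\Omega)}^2+E_\varepsilon(u,v),
\end{align*}
where $E_\varepsilon$ is the Keller--Segel energy: $\int U(u)\,dx$ with $U''(r)m(r)=pr^{p-1}$, minus $\int uv\,dx$, plus $\tfrac12\|v\|_{H^1(\Omega)}^2$. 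Existence of minimizers comes from lower semicontinuity and compactness of $W_{m_\varepsilon}$ (from \cite{DNS}), which do not need convexity. The one-step inequality, summed over $k$, gives the standard energy and distance estimates:
\begin{align*}
\sum_k\frac{1}{\tau}W_{m_\varepsilon}(u_\tau^k,u_\tau^{k-1})^2\le C
\quad\text{and}\quad
\sup_k E_\varepsilon\le C.
\end{align*}
In the range $p>1+\alpha-2/d$ the energy is bounded below by the Gagliardo--Nirenberg inequality. In the critical case $p=1+\alpha-2/d$ this lower bound needs $\chi$ small. Neither step uses geometry beyond the Sobolev inequalities on smooth bounded domains.

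\textbf{Flow interchange.} To get the Euler--Lagrange equation and the regularity $u^{(p+1-\alpha)/2}\in L^2H^1$, apply Corollary \ref{cor3.4} along the lines of \eqref{eq1.4}--\eqref{eq1.5}. Two choices of the auxiliary functional are needed.
\begin{itemize}[leftmargin=*]
\item For the Euler--Lagrange equation, take $F(u)=\int u\varphi\,dx+\delta\int U_\varepsilon(u)\,dx$ with $\varphi$ smooth and $\nabla\varphi\cdot\boldsymbol n=0$. Compute $-\frac{d}{dh}E_\varepsilon(S_hu)$ explicitly, divide by $\delta$ or let $\delta\to0$ as appropriate, then replace $\varphi$ by $-\varphi$.
\item For regularity, take $\varphi=0$ together with a suitable entropy, as in \cite{M}. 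This yields $\int|\nabla (u+\varepsilon)^{(p+1-\alpha)/2}|^2\lesssim \frac{1}{\tau}(F(u_\tau^{k-1})-F(u_\tau^k))+\text{lower order terms}$.
\end{itemize}

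\textbf{The main obstacle.} The new difficulty sits here. The constant $\lambda_\delta$ now carries the extra term $-\delta C/\sigma$, and $\sigma$ depends on $\Omega$ and on the \eqref{M-A} constant $\alpha/(1-\alpha)$, which is independent of $\varepsilon$. So the term $\frac{\lambda_\delta}{2}W^2$ contributes $\delta C\sum_k W(u_\tau^k,u_\tau^{k-1})^2/\tau\le \delta C$ after summation, and this vanishes as $\delta\to0$. The step must be checked carefully: in the regularity estimate the dissipation must remain uniform in $\delta$ and $\varepsilon$ after this extra term is added.

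\textbf{Passage to the limit.} Pass to the limit $\tau\to0$ and then $\varepsilon\to0$, or take them jointly as in \cite{M}. The tools are the discrete Aubin--Lions lemma and a refined Ascoli theorem, using the $W_{m_\varepsilon}$-H\"older bound and Lemma \ref{lem4.6}, since $W_{m_\varepsilon}\le W_m$ by monotonicity in the mobility. Along the way, identify the nonlinear terms $\nabla u^p$ and $u^\alpha\nabla v$ by strong $L^1$ convergence. Attainment of the initial data, $W_m(u(t),u_0)\to0$, follows from the uniform H\"older estimate. The $v$-equation is linear and is handled exactly as in \cite{M}.
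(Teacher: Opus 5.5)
This is the paper's proof: run the scheme of \cite{M} unchanged, replace its convexity-dependent EVI (\cite[Lemma 4.4]{M}) by Theorem \ref{thm3.1}/Corollary \ref{cor3.4}, and check \eqref{M-LSC} and \eqref{M-A} for $m_\varepsilon(r)=(r+\varepsilon)^\alpha$; your observation that the \eqref{M-A} ratio is $\alpha/(1-\alpha)$, so that $\sigma$ does not depend on $\varepsilon$, is correct and is a detail the paper leaves implicit. There are two slips that do not affect the argument: to recover \eqref{kspp} the weights should be $\frac{\chi}{2\tau}\|v-v_\tau^{k-1}\|_{L^2(\Omega)}^2$ and $\int_\Omega U(u)\,dx-\chi\int_\Omega uv\,dx+\frac{\chi}{2}\|v\|_{H^1(\Omega)}^2$ (your weights give $\partial_t v=\chi(\Delta v-v+u)$ and no $\chi$ in the $u$-equation), and the extra $-\delta C/\sigma$ in $\lambda_\delta$ contributes only $\frac{\delta C}{2\sigma}\sum_k W_{m_\varepsilon}(u_\tau^k,u_\tau^{k-1})^2\le C\delta\tau/\sigma$ after multiplying by $\tau$ and summing, so it is negligible even at fixed $\delta$, which is what the $\varphi=0$ regularity step needs since the dissipation there carries the factor $\delta$.
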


\begin{proof}
   The proof is almost same as the one in \cite{M} except for \cite[Lemma 4.4]{M}.
   Substituting Theorem \ref{thm3.1} for it,
   we can complete the proof.
   Thus, to apply Theorem \ref{thm3.1}, 
   let us check that the approximated mobility $m_\varepsilon(u)=(u+\varepsilon)^\alpha$
   satisfies the conditions \eqref{M-LSC} and \eqref{M-A}.
   Indeed,
   \begin{align*}
      &\sup_{r\geq0}|m_\varepsilon^\prime(r)| 
      = \sup_{r\geq0}\{\alpha(r+\varepsilon)^{\alpha-1}\} 
      = \frac{\alpha}{\varepsilon^{1-\alpha}},\\
      &\sup_{r\geq0}|m_\varepsilon^{\prime\prime}(r)m_\varepsilon(r)| 
      = \sup_{r\geq0}\{\alpha(1-\alpha)(r+\varepsilon)^{2(\alpha-1)}\} 
      = \frac{\alpha(1-\alpha)}{\varepsilon^{2(1-\alpha)}},\\
      &\sup_{r\geq0}\frac{(m_\varepsilon^\prime(r))^2}{|m_\varepsilon^{\prime\prime}(r)m_\varepsilon(r)|} 
      = \frac{\alpha}{1-\alpha}.
   \end{align*}
   The proof is completed.
\end{proof}

\subsection{Cahn--Hilliard type equations}

\quad We consider the following Cahn--Hilliard type equation with nonlinear mobility, 
which is considered in \cite{LMS}:
\begin{align}\label{ch}
   \begin{cases}
      \partial_t u = - \nabla\cdot(m(u)\nabla(\Delta u - G^\prime(u)))\quad &\mathrm{in}\ \Omega\times(0,\infty),\\
      \nabla u\cdot\boldsymbol{n} = 0, \quad (m(u)\nabla(\Delta u - G^\prime(u)))\cdot\boldsymbol{n} = 0 &\mathrm{on}\ \partial\Omega\times(0,\infty),\\
      u(\cdot,0) = u_0(\cdot) &\mathrm{in}\ \Omega,
   \end{cases}
\end{align}
where $\Omega$ is a bounded domain in $\mathbb{R}^d$ with the smooth boundary,
$d\geq1$, $\boldsymbol{n}$ is the outer unit normal vector to $\partial\Omega$
and the mobility $m$ satisfies the conditions in Section 3.1.
In addition, 
concering the free energy $G \in C^2(0,M)$ and the associated pressure $P$ satisfying
\begin{align*}
   P^\prime(s) = m(s)G^{\prime\prime}(s),
\end{align*}
we assume that
there exist a constant $C\geq0$ and an exponent $q > 2$ with $q < 2d/(d-4)$ if $d>4$ such that 
\begin{align}\label{G}
   \begin{cases}
      mG^{\prime\prime} \geq -C,\quad P \in C([0,M])\quad &\mathrm{if}\ M < \infty,\\
      \displaystyle
      mG^{\prime\prime} \geq -C(1+m)\quad \mathrm{in}\ (0,\infty), 
      \quad P \in C([0,\infty)),\quad \lim_{s\to\infty}\frac{P(s)}{s^q + |G(s)|} = 0
      &\mathrm{if}\ M = \infty.
   \end{cases}\tag{G}
\end{align}

\quad Equations of the form \eqref{ch} arise as hydrodynamic approximation to models 
for many-particle systems in gas dynamics and also in lubrication theory.
For instance (see also \cite{CH, TC}), by choosing $m(r) = r(1-r)$ (the case $M = 1 <\infty$) and $G(r) = \theta r^2(1-r)^2$ for $\theta \in \mathbb{R}$,
the equation \eqref{ch} is a model for the phase separation for a binary alloy:
\begin{align*}
   \partial_t u = -\nabla\cdot(u(1-u)\nabla\Delta u) + \theta\Delta u^2(1-u)^2.
\end{align*}
In addition, if $G(r) = \theta(r\log r + (1-r)\log (1-r))$ 
then we have an equation for the volume fraction of one component in binary gas mixture:
\begin{align*}
   \partial_t u = - \nabla\cdot(u(1-u)\nabla\Delta u) + \theta\Delta u.
\end{align*}
On the other hand, by choosing $m(r) = r^\alpha$ with $1/2 < \alpha \leq 1$ (the case $M=\infty$) and
\begin{align*}
   G(r) = \kappa\frac{\beta}{(\beta-\alpha)(\beta - \alpha + 1)}r^{\beta-\alpha+1}
\end{align*}
for $\kappa \in \mathbb{R}$ and $1\leq \beta \leq \alpha +1$,
the equation \eqref{ch} is the thin film equation:
\begin{align*}
   \partial_t u = - \nabla\cdot(u^\alpha\nabla\Delta u) + \kappa\Delta(u^\beta).
\end{align*}

\quad In \cite{LMS}, the authors proved the existence of weak solutions to \eqref{ch} in a bounded convex domain
under the above assumptions by the minimizing movement in weighted Wasserstein metrics.
On the other hand, in \cite{EG}, the related existence result is proved in a bounded domain (not convex)
under the similar assumptions.
Note that the minimizing movement is not used in \cite{EG}.
Then combining Theorem \ref{thm3.1} with the argument of the minimizing movement in the weighted Wassrstein space in \cite{LMS},
we can show the existence of weak solutions to \eqref{ch} in a bounded non-convex domain by the gradient flow approach.
This also means that Theorem \ref{thm3.1} can improve the result in \cite{LMS}.

\begin{thm}\label{thm4.2}
   For the mobility $m$ in Section 3.1, assume that $m$ satisfies
   \begin{align}
      \lim_{r\downarrow0}r^{\frac{1}{2}}m^\prime(r) = 0\quad \mathrm{and}\ 
      \lim_{r\uparrow M}(M-r)^{\frac{1}{2}}m^\prime(r) = 0\quad \mathrm{if}\ M<\infty
   \end{align}
   and $G$ satisfies \eqref{G}.
   Let $u_0 \in H^1\cap\mathcal{P}(\Omega)$ such that
   \begin{align*}
      0 \leq u_0 \leq M\ \mathrm{a.e.\ in}\ \Omega,\quad E(u_0) < \infty,
   \end{align*}
   where $E$ is the energy functional defined by
   \begin{align*}
      E(v) = \frac{1}{2}\int_{\Omega} |\nabla v|^2\, dx + \int_{\Omega} G(v)\, dx.
   \end{align*}
   Then there exists a weak solution $u$ to \eqref{ch} satisfying the following:
   \begin{align*}
      &u \in L_{loc}^2([0,\infty);H^2(\Omega))\cap C_w([0,\infty);H^1(\Omega)),\\
      &\|u(t)\|_{L^1(\Omega)} = \|u_0\|_{L^1(\Omega)},\ 0\leq u(t) \leq M\ \mathrm{a.e.\ in}\ \Omega\quad \mathrm{for}\ t\geq0,\\
      &E(u(t)) \leq E(u_0)\quad \mathrm{for\ all}\ t\geq0,
   \end{align*}
   and
   \begin{align*}
      \int_0^\infty\int_{\Omega} u\partial_t \varphi\, dx\, dt 
      = \int_0^\infty\int_{\Omega} \Delta u \nabla\cdot(m(u)\nabla\varphi)\, dx\, dt
      - \int_0^\infty\int_{\Omega} P(u)\Delta\varphi\, dx\, dt
   \end{align*}
   for every test function $\varphi \in C_c^\infty(\bar{\Omega}\times(0,\infty))$ 
   such that $\nabla\varphi\cdot\boldsymbol{n} = 0$ on $\partial\Omega$.
\end{thm}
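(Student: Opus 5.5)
The plan is to follow the strategy already used for Theorem \ref{thm4.1}: reproduce the minimizing movement argument of \cite{LMS} for \eqref{ch} verbatim, and change only the step where convexity of $\Omega$ enters. That step is the flow interchange (EVI) estimate of \cite{LMS}, which we replace by Theorem \ref{thm3.1} and Corollary \ref{cor3.4}.

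\emph{Regularization and discrete scheme.} Fix $\varepsilon>0$ and work with the approximated mobility $m_\varepsilon$ of Section 3.1. By Lemma \ref{lem4.6}, $m_\varepsilon$ is smooth and concave, satisfies $m\le m_\varepsilon$, and converges to $m$ monotonically and uniformly. For $\tau>0$, define the JKO iterates $u_\tau^k$ by \eqref{eq1.2} with $W_{m_\varepsilon}$ and the energy $E$. Existence of minimizers, mass conservation, the constraint $0\le u_\tau^k\le M$, and the energy bound $E(u_\tau^k)\le E(u_0)$ are metric or lower semicontinuity arguments. They do not depend on the geometry of $\Omega$, so we cite \cite{LMS}.

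\emph{Flow interchange in a non-convex domain.} Apply Corollary \ref{cor3.4} with $F(u)=\int u\varphi\,dx+\delta\int U_\varepsilon(u)\,dx$, first with $\varphi\equiv0$ and then with a test function $\varphi$ satisfying $\nabla\varphi\cdot\boldsymbol{n}=0$. Before doing so, we must check that $m_\varepsilon$ satisfies \eqref{M-LSC} and \eqref{M-A}. For $M<\infty$ this was noted in Section 3.1. For $M=\infty$, $m$ is assumed to be in the class of Section 3.1. Inserting $S_h(u_\tau^k)$ into \eqref{eq1.4} and combining with the EVI gives \eqref{eq1.5}. Following \cite{LMS}, the left-hand side of \eqref{eq1.5} is expanded: integrating by parts on $E(S_h u)$ at $h=0$ produces $\delta\int(\Delta u)^2\,dx$-type terms and the term $\int\Delta u\,\nabla\cdot(m_\varepsilon(u)\nabla\varphi)\,dx$. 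Here the boundary terms vanish by the Neumann conditions alone, so no convexity is needed. The $\delta$-dependent part of $\lambda_\delta$ involves $\delta C/\sigma$ with a constant $C$ that depends only on $\Omega$ and $\varepsilon$. It is therefore harmless: it is multiplied by $W_{m_\varepsilon}^2$, and summing over $k$ with $\tau\sum_k W^2/\tau^2\le 2E(u_0)$ controls it.

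\emph{Estimates and passage to the limit.} Summing in $k$ yields the discrete $L^2_{loc}(H^2)$ bound on the interpolants, in the non-convex domain, exactly as in \cite{LMS}. Sending $\delta\to0$ in the $\varphi$-inequality, and using the symmetric choice $\varphi\mapsto-\varphi$, gives the discrete weak formulation up to $O(\tau)$ errors, including the pressure term $P(u)\Delta\varphi$ via $P'=mG''$ and hypothesis \eqref{G}. Compactness in $\tau$ comes from the Aubin--Lions lemma together with the H\"older-in-time estimate in $W_{m_\varepsilon}$. We then let $\tau\to0$ and finally $\varepsilon\to0$, using the uniform convergence of Lemma \ref{lem4.6} and the conditions on $r^{1/2}m'(r)$ to pass to the limit in the nonlinear term $\nabla\cdot(m(u)\nabla\varphi)$.

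\emph{Main obstacle.} The main difficulty is verifying that $\lambda_\delta$ from Theorem \ref{thm3.1} is compatible with the order of limits. In the convex case of \cite{LMS}, $\lambda$ is independent of the domain correction, whereas here the term $\delta C/\sigma$ appears and $\sigma$ depends on $\varepsilon$. I would handle this by taking $\delta\to0$ before $\varepsilon\to0$: the term $\delta C(\Omega,\varepsilon)/\sigma(\varepsilon)$ then vanishes for each fixed $\varepsilon$. All remaining steps carry over from \cite{LMS} unchanged.
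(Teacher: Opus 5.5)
Your architecture is the paper's: rerun \cite{LMS} with the approximation from above of Lemma \ref{lem4.6}, and substitute Theorem \ref{thm3.1} and Corollary \ref{cor3.4} for the convex-domain EVI. However, your premise that the EVI is the only place where convexity enters is false, and this breaks your step ``summing in $k$ yields the $L^2_{loc}(H^2)$ bound exactly as in \cite{LMS}''. The flow interchange with the heat flow ($\varphi\equiv0$) only controls $\int_\Omega(\Delta u_\tau^k)^2\,dx$. \cite{LMS} pass from this to control of second derivatives through a Sobolev-like inequality, \cite[Lemma A.1]{LMS}, whose proof uses convexity to discard the boundary term $\int_{\partial\Omega}\nabla^2 f\nabla f\cdot\boldsymbol{n}\,dS=\frac12\int_{\partial\Omega}\nabla(|\nabla f|^2)\cdot\boldsymbol{n}\,dS$ for Neumann $f$. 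In a non-convex domain this term has no sign, so a replacement is required. The paper supplies Lemma \ref{lem4.7}, $\int_\Omega|\nabla^2 f|^2\,dx\le 2\int_\Omega(\Delta f)^2\,dx+C\|f\|_{H^1(\Omega)}^2$. It is proved by the same device as the estimate of $\mathrm{J}_5$: Lemma \ref{lem2.1}, the trace inequality, fractional Gagliardo--Nirenberg and Young, then Lemma \ref{lem2.5} to absorb half of $\int_\Omega|\nabla^2 f|^2\,dx$. The extra $H^1$ term is controlled by the energy and the mass. In a smooth domain, standard $H^2$-regularity for the Neumann problem would also serve, but you must invoke something.

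Second, replacing the approximation from below of \cite[Lemma 5.1]{LMS} by the one from above is not free, so ``all remaining steps carry over unchanged'' fails again. With $m\le m_\varepsilon$ you only have $W_{m_\varepsilon}\le W_m$. Your Hölder-in-time estimates hold in the varying metrics $W_{m_\varepsilon}$, so they no longer transfer to the fixed metric $W_m$ by monotonicity. The paper therefore obtains the estimate (61) in \cite[Proposition 3.1]{LMS} from the refined Ascoli--Arzel\`a theorem \cite[Lemma 6.3]{M}, which is tailored to approximation from above. Your $\varepsilon\to0$ step (``using the uniform convergence of Lemma \ref{lem4.6}'') provides nothing in its place.

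Finally, your ``main obstacle'' is misplaced. The new term $\delta C/\sigma$ in $\lambda_\delta$ contributes only $O(\delta\tau/\sigma)$ after summing, since $\sum_k W_{m_\varepsilon}(u_\tau^k,u_\tau^{k-1})^2=O(\tau)$. The delicate part is $-\frac{1}{\delta}\|\nabla\varphi\|_{L^\infty(\Omega)}^2\sup(m_\varepsilon|m_\varepsilon''|)$, already present in \cite{LMS}, which produces an error of order $\tau/\delta$. So you cannot send $\delta\to0$ at fixed $\tau$ as you propose: $\delta$ must be coupled to $\tau$, or $\tau\to0$ taken first, and the resulting errors are not $O(\tau)$.
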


\begin{rem}
   $C_w(([0,\infty);H^1(\Omega)))$ denotes the space of weakly continuous curves 
   $u : [0,\infty) \to H^1(\Omega)$, that is, if $u \in C_w([0,\infty);H^1(\Omega))$ then
   for $t \in [0,\infty)$, it holds
   \begin{align*}
      \langle\zeta, u(s)\rangle \to \langle\zeta, u(t)\rangle
      \quad \mathrm{as}\ s\to t\quad \mathrm{for\ all}\ \zeta \in (H^1(\Omega))^*.
   \end{align*}
\end{rem}

In the proof of \cite{LMS}, the convexity of the domain is also used 
when a Sobolev like inequality is established (see \cite[Lemma A.1]{LMS}).
However, employing the same method of the proof of Theorem \ref{thm3.1},
we can show the alternative inequality, which is enough to prove Theorem \ref{thm4.2}.

\begin{lem}\label{lem4.7}
   Assume that $\Omega$ is a bounded domain and 
   that $f\in H^2(\Omega)$ satisfies $\nabla f \cdot\boldsymbol{n} = 0$ on $\partial\Omega$.
   Then there exists a positive constant $C>0$ depending on $\Omega$ such that
   \begin{align}\label{eqA}
      \int_{\Omega} |\nabla^2 f|^2\, dx \leq 2\int_{\Omega} (\Delta f)^2\, dx + C\|f\|_{H^1(\Omega)}^2.
   \end{align}
\end{lem}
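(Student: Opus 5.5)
We would follow the route of the boundary estimate in the proof of Theorem \ref{thm3.1}. By density we first take $f$ smooth on $\overline{\Omega}$ with $\nabla f\cdot\boldsymbol{n}=0$ on $\partial\Omega$; both sides of \eqref{eqA} are continuous in the $H^2$ norm, so the general case $f\in H^2(\Omega)$ follows by approximation. Since elliptic regularity for the Neumann problem gives smooth approximants with the correct boundary condition, we may argue with $C^3(\overline{\Omega})$ functions.

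The first step is to integrate the Bochner formula $\tfrac12\Delta|\nabla f|^2=|\nabla^2 f|^2+\nabla f\cdot\nabla\Delta f$ over $\Omega$. This gives
\begin{align*}
\int_\Omega|\nabla^2 f|^2\,dx=\frac12\int_{\partial\Omega}\nabla|\nabla f|^2\cdot\boldsymbol{n}\,dS-\int_\Omega\nabla f\cdot\nabla\Delta f\,dx.
\end{align*}
Integrating by parts in the last term, and using $\nabla f\cdot\boldsymbol{n}=0$ to kill the boundary contribution, turns it into $\int_\Omega(\Delta f)^2\,dx$. In the convex case the boundary term is nonpositive and \eqref{eqA} would even hold with constant $1$. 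Here we instead bound the boundary term by Lemma \ref{lem2.1}:
\begin{align*}
\frac12\int_{\partial\Omega}\nabla|\nabla f|^2\cdot\boldsymbol{n}\,dS\le \frac{C_\Omega}{2}\,\||\nabla f|\|_{L^2(\partial\Omega)}^2 .
\end{align*}

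The main step, exactly as for $\mathrm{J}_5$, is to absorb this boundary term, and there are three ingredients. First, fix $s\in(\tfrac12,1)$ and apply Lemma \ref{lem2.2} to $g=|\nabla f|$, which is Lipschitz and hence lies in $H^1(\Omega)$. Second, apply Lemma \ref{lem2.4} together with Young's inequality to obtain
\begin{align*}
\||\nabla f|\|_{L^2(\partial\Omega)}^2\le \sigma\|\nabla|\nabla f|\|_{L^2(\Omega)}^2+\frac{\bar C}{\sigma}\|\nabla f\|_{L^2(\Omega)}^2 .
\end{align*}
Third, use Lemma \ref{lem2.5} to bound $\|\nabla|\nabla f|\|_{L^2}\le\|\nabla^2 f\|_{L^2}$. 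Choosing $\sigma$ with $\tfrac{C_\Omega C_0^2}{2}\sigma\le\tfrac12$, the Hessian term on the right is at most $\tfrac12\int_\Omega|\nabla^2 f|^2\,dx$ and can be moved to the left. This yields
\begin{align*}
\frac12\int_\Omega|\nabla^2 f|^2\,dx\le\int_\Omega(\Delta f)^2\,dx+C\|\nabla f\|_{L^2(\Omega)}^2 ,
\end{align*}
and multiplying by $2$ gives \eqref{eqA}. The factor $2$ in the statement reflects precisely this absorption with constant $\tfrac12$.

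The delicate points are technical. One is justifying Lemma \ref{lem2.5} and Lemma \ref{lem2.4} for $|\nabla f|$, which is only Lipschitz; this is fine because Lemma \ref{lem2.5} holds a.e. and Lipschitz functions belong to $H^1(\Omega)$. The other is the density argument preserving the Neumann condition. We expect the absorption step to be the crux, but it is already handled by the ingredients used for $\mathrm{J}_5$.
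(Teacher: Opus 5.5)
Your proposal is correct and follows essentially the same route as the paper. Your integrated Bochner identity is the same integration by parts the paper performs, giving $\int_\Omega|\nabla^2 f|^2\,dx=\frac12\int_{\partial\Omega}\nabla|\nabla f|^2\cdot\boldsymbol{n}\,dS+\int_\Omega(\Delta f)^2\,dx$; you then bound the boundary term via Lemmas \ref{lem2.1}, \ref{lem2.2}, \ref{lem2.4}, Young's inequality and Lemma \ref{lem2.5}, and absorb half of the Hessian term exactly as the paper does, which is where the factor $2$ comes from.
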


\begin{proof}
   By density, it suffies to prove \eqref{eqA} for $f \in C^\infty(\bar{\Omega})$ 
   satisfying $\nabla f\cdot\boldsymbol{n} = 0$ on $\partial\Omega$.
   It follows from integration by parts that
   \begin{align*}
      \int_{\Omega} |\nabla^2 f|^2\, dx
      &= \int_{\partial\Omega} \nabla^2 f\nabla f\cdot\boldsymbol{n} \, dS 
       - \int_\Omega \nabla\Delta f\cdot \nabla f\, dx\\
      &= \int_{\partial\Omega} \nabla^2 f\nabla f\cdot\boldsymbol{n}\, dS 
       - \int_{\partial\Omega} \Delta f\nabla f\cdot\boldsymbol{n}\, dS 
       + \int_{\Omega} (\Delta f)^2\, dx.
   \end{align*}
   Since $\nabla f\cdot\boldsymbol{n} = 0$ on $\partial\Omega$, 
   the second term vanishes.
   In addition, by Lemmas \ref{lem2.1}, \ref{lem2.2} and \ref{lem2.4},
   the first term can be estimated as follows:
   \begin{align*}
      \int_{\partial\Omega} \nabla^2 f\nabla f\cdot\boldsymbol{n} \, dS 
      &= \int_{\partial\Omega} \frac{1}{2}\nabla(|\nabla f|^2)\cdot\boldsymbol{n} \, dS\\
      &\leq \frac{C_\Omega}{2} \int_{\partial\Omega} |\nabla f|^2\, dS = \frac{C_\Omega}{2}\|\nabla f\|_{L^2(\partial\Omega)}^2\\
      &\leq \frac{C_\Omega}{2}C_0^2 \|\nabla f\|_{W^{s,2}(\Omega)}^2\quad \left(\frac{1}{2} < s < 1\right)\\
      &\leq C_\Omega C_0^2 \left(C_1^2\|\nabla|\nabla f|\|_{L^2(\Omega)}^{2s}\|\nabla f\|_{L^2(\Omega)}^{2(1-s)} + C_2^2\|\nabla f\|_{L^2(\Omega)}^2\right),
   \end{align*}
   Using Young's inequality, we obtain
   \begin{align*}
      \int_{\partial\Omega} \nabla^2 f\nabla f\cdot\boldsymbol{n} \, dS 
      \leq \frac{1}{2}\|\nabla|\nabla f|\|_{L^2(\Omega)}^2 + C\|\nabla f\|_{L^2(\Omega)}^2,
   \end{align*}
   where $C>0$ is a constant depending on $\Omega$.
   Finally, it follows from Lemma \ref{lem2.5} that
   \begin{align*}
      \int_{\partial\Omega} \nabla^2 f\nabla f\cdot\boldsymbol{n} \, dS 
      \leq \frac{1}{2}\int_{\Omega} |\nabla^2 f|^2\, dx + C\|f\|_{H^1(\Omega)}^2.
   \end{align*}
   The proof is completed.
\end{proof}

\begin{proof}[Proof of Theorem \ref{thm4.2}]
   The proof is almost same as the one in \cite{LMS} except for some arguments.
   First, we use Theorem \ref{thm3.1} (Corollary \ref{cor3.4}) in the proof of \cite[Lemma 4.2 and Proposition 4.6]{LMS}.
   Moreover, the estimate (61) in \cite[Proposition 3.1]{LMS} follows from \cite[Lemma 6.3]{M}, 
   which is the refined Ascoli--Arzel\`a theorem for the approximation from above.
   Finally, substituting Lemma \ref{lem4.6} and Lemma \ref{lem4.7} 
   for the lower bound in \cite[Lemma A.1]{LMS} and \cite[Lemma 5.1]{LMS} respectively,
   we complete the proof.
\end{proof}


\end{document}